\documentclass[12pt]{amsart}
\usepackage{graphicx}
\usepackage[margin=1.3in]{geometry}
\usepackage{amsmath}
\usepackage{bbm}
\usepackage{amsthm}
\usepackage{mathtools}
\usepackage{enumitem}
\usepackage{pgfplots}
\usepackage{tikz}
\usepackage{tikz-cd}
\usetikzlibrary{shapes.geometric}
\usepgfplotslibrary{colormaps}
\usepackage{soul}
\usepackage{comment}

\newtheorem{theorem}{Theorem}[section]
\newtheorem{lemma}[theorem]{Lemma}
\newtheorem{corollary}[theorem]{Corollary}

\theoremstyle{definition}

\newtheorem{example}[theorem]{Example}

\theoremstyle{remark}

\title{Conjugacies of Expanding Skew Products on $\TT^n$}
\author{Gregory Hemenway}

\address{Dept.\ of Mathematics, The Ohio State University, Columbus, OH 43210}
\email{hemenway.math@gmail.com}

% Various useful commands

\newcommand{\leb}{\boldsymbol{m}}
\newcommand{\one}{\mathbbm{1}}

%Math Symbols
\newcommand{\NN}{\mathbbm{N}}
\newcommand{\RR}{\mathbbm{R}}
\newcommand{\sS}{\mathbbm{S}^1}
\newcommand{\ZZ}{\mathbbm{Z}}

\newcommand{\TT}{\mathbbm{T}}

\newcommand{\LL}{\mathcal{L}}

\newcommand{\MM}{\mathcal{M}}

%Structural
%\newcommand{\sp}{\vspace{10mm}}

%Notations

\newcommand{\piXI}{\pi_X^{-1}}

\newcommand{\skprod}{X\times Y}

\newcommand{\holder}{\mathrm{H\ddot{o}lder}}

\pgfplotsset{compat=1.18}
\begin{document}
\date{\today}
\thanks{The author was partially supported by NSF DMS-1554794 and DMS-2154378.}
\thanks{This material is based upon work partially supported by the National Science Foundation MPS-Ascend 
Postdoctoral Research Fellowship under Grant No. DMS-2316687.}
\pagenumbering{arabic}

\begin{abstract}
{We show that any equilibrium state for a H\"older potential on the model map $\Vec{x}\mapsto d\cdot\Vec{x}\mod \ZZ^n$ on $\TT^n$ is conjugate to Lebesgue measure for an invariant expanding skew product of degree $d$. This is a generalization of a result of McMullen to higher dimensions for equilibrium states. We use an approach developed by the author using a family of nonstationary transfer operators for an expanding skew product. We also apply a Markov partition argument to classify invariant probability measures for expanding maps on $\TT^n$.}
\end{abstract}

\maketitle
%\tableofcontents

\section{Introduction}

The study of conjugacies of expanding maps on the circle $\sS$ dates back to Shub \cite{Sh69} who showed that any expanding map of degree $d$ is topologically conjugate to the model mapping $E_d : \sS \rightarrow \sS$
\[ E_d(x) = d \cdot x \text{ mod }1.\]
McMullen \cite{McM} proved that there is a bijection between the Teich\"muller space of marked {topological covering maps of degree $d$} that preserve Lebesgue measure and the space of $E_d$-invariant probability measures on $\sS$, $\mathcal{M}(\sS,E_d)$. { In particular, McMullen used this result to describe the measure of maximal entropy on $\sS$ for maps including rational and expanding maps.}

The purpose of this paper is to construct conjugacies on $\TT^n=\sS\times\ldots\times\sS$ which send equilibrium states for the model map $E_{d}(\Vec{x})=d\cdot\Vec{x}\mod \ZZ^n$ to Lebesgue measure for some expanding map on $\TT^n$. {The existence of conjugacies between expanding endomorphisms on higher dimensional tori is again due to Shub (see Proposition 6 in \cite{Sh69}). Our result generalizes McMullen \cite{McM} in the case of equilibrium states {$\mu_\varphi$ for $(\TT^n,E_d)$}. In the one-dimensional case, the proofs rely on the linear ordering of the circle. We need different proofs in higher dimensions since there is no natural ordering on $\RR^2$ applicable to our purpose.}  In particular, we will prove a more general version of the following theorem (see Theorem \ref{thm:gen}).

\renewcommand{\thetheorem}{\Alph{theorem}}
\setcounter{theorem}{0}
\begin{theorem}\label{thm:A}
    Let $\mu_{\varphi}$ be an equilibrium state for $(\TT^2,E_d)$ for a H\"older potential {$\varphi$} of exponent $\alpha$. There exists a conjugacy $H\colon \TT^2\to\TT^2$ such that Lebesgue is preserved by a $C^{1+\alpha}$ expanding $F=H\circ E_d\circ H^{-1}$.
\end{theorem}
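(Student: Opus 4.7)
The plan is to realize $F$ as an expanding skew product over the base map and to construct $H$ coordinate by coordinate by applying McMullen's one-dimensional theorem in the base and in each fiber, with the regularity handled by the nonstationary transfer operator technology developed earlier in the paper.

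Write $E_d(x,y) = (dx, dy)$ as a trivial skew product over the base $E_d : \sS \to \sS$, and project $\mu_\varphi$ to $\nu := (\piX)_* \mu_\varphi$, an $E_d$-invariant probability measure on $\sS$. By the Markov partition classification promised in the paper's abstract, $\nu$ is itself an equilibrium state for a H\"older potential on the base circle, so McMullen's theorem produces a homeomorphism $h_1 : \sS \to \sS$ and a $C^{1+\alpha}$ expanding $g : \sS \to \sS$ with $h_1 \circ E_d = g \circ h_1$ and $(h_1)_* \nu = \leb$. Then disintegrate $\mu_\varphi = \int_\sS \mu_x \, d\nu(x)$ along fibers and apply McMullen in each fiber to $\mu_x$: for each $x$ one obtains a homeomorphism $h_{2,x} : \sS \to \sS$ and a $C^{1+\alpha}$ expanding circle map $f_x$ with $(h_{2,x})_* \mu_x = \leb$ and $h_{2,x} \circ E_d = f_x \circ h_{2,x}$. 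Setting $H(x,y) := (h_1(x), h_{2,x}(y))$, a short disintegration calculation gives $H_* \mu_\varphi = \leb$, and the conjugated map takes the skew-product form
\[
F(X,Y) = \bigl(g(X),\, f_{h_1^{-1}(X)}(Y)\bigr).
\]

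The main obstacle is the regularity of $F$: while $g$ and each individual $f_x$ are $C^{1+\alpha}$, the delicate point is to show that $x \mapsto f_x$ depends on the base variable with the regularity required for $F$ to be globally $C^{1+\alpha}$ as a skew product on $\TT^2$. This is precisely where the nonstationary transfer operator framework flagged in the abstract enters: the fiber measure $\mu_x$ should be realizable as a fixed point of a composition of normalized transfer operators $\LL_{\overline{\varphi}_{x_n}}$ indexed along the inverse orbit of $x$ under $E_d$, and cone-contraction together with spectral gap estimates for these nonstationary compositions deliver H\"older control of the normalized fiber Jacobians $\log|f_x'(y)|$ in the base variable $x$, uniformly in $y$. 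This propagates through the skew-product structure to yield the advertised $C^{1+\alpha}$ regularity of $F$, while expansion of $F$ is automatic from the expansion of $g$ and of each $f_x$. The Markov partition classification is invoked at the outset to place $\nu$ and each fiber conditional $\mu_x$ in the Gibbs class on which these transfer operator estimates are valid.
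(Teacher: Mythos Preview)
Your overall architecture---conjugate in the base, then in the fibers, and assemble a skew product---matches the paper. But two concrete points go wrong. First, the claim that the projection $\nu=(\piX)_*\mu_\varphi$ is a H\"older equilibrium state does \emph{not} come from any ``Markov partition classification''; in the paper it is Theorem~\ref{thm:conds} (the nonstationary transfer-operator result) that produces a H\"older base potential $\Phi$ with $\hat\mu=\nu$ as its equilibrium state. The Markov partition argument in the paper is used only to \emph{count} conjugacies, not to place measures in a Gibbs class. Second, and more seriously, you cannot ``apply McMullen in each fiber to $\mu_x$'': $\mu_x$ is not invariant under a self-map of the fiber, since the fiber dynamics go $Y_x\to Y_{E_d x}$. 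Your relation $h_{2,x}\circ E_d=f_x\circ h_{2,x}$ then forces $F\circ H(x,y)=(h_1(dx),h_{2,x}(dy))$ while $H\circ E_d(x,y)=(h_1(dx),h_{2,dx}(dy))$, so your $H$ is not a conjugacy unless $h_{2,x}\equiv h_{2,dx}$. The paper avoids this by defining $\phi_x(y)=\mu_x([0,y])$ directly (no appeal to McMullen fiberwise) and then \emph{defining} $g_x:=\phi_{E_d x}\circ E_d\circ\phi_x^{-1}$, so that the commutative square involves $\phi_{E_d x}$ at the target, not $\phi_x$.

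On regularity, the paper does not argue via abstract cone-contraction for nonstationary compositions. Instead it computes the derivatives explicitly: Lemma~\ref{lem:transDeriv} gives $f'(x)=e^{-\tilde\Phi(\hat\phi^{-1}(x))}$ and the fiber analogue gives $g_x'(y)=e^{-\tilde\varphi_x(\phi_x^{-1}(y))}$, where $\tilde\Phi,\tilde\varphi$ are the cohomologous normalizations built from the RPF eigendata $h,\hat h$. These eigenfunctions are H\"older by the RPF theorem, which immediately yields $C^{1+\alpha}$ for $f$ and for each $g_x$; the cross-derivative $\partial g_x/\partial x$ is handled by continuity of $x\mapsto\mu_x$ from Theorem~\ref{thm:conds}, and the determinant identity $|\det DF|=e^{-\tilde\varphi\circ H^{-1}}$ (Theorem~\ref{thm:det}) packages everything. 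So the regularity input is the H\"older regularity of the eigendata, fed through explicit derivative formulas, rather than a black-box spectral gap estimate on the fiber Jacobians.
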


\renewcommand{\thetheorem}{\arabic{section}.\arabic{theorem}}

{We can approach the higher dimensional setting by viewing $(\TT^2,E_d)$ as a skew product. That is, for a unique equilibrium state $\mu_\varphi$ on $(\TT^2,E_d)$, we construct a Lebesgue-invariant expanding skew product $$F(x,y)=(f(x),g_x(y))$$ for which the conjugacy $H\colon \TT^2\to\TT^2$ between $F$ and $E_d$ satisfies $\mu_\varphi=H_*\text{Leb}$. This reduces the $\TT^2$ case down to a vertical collection of circular fibers $Y_x=\{x\}\times\sS$ organized by a base circle.}

{Skew products are the universal covers of random dynamical systems. In this setting the construction of equilibrium measures has been studied}
by Kifer \cite{K92}, Urbanski--Simmons \cite{SU13}, Stadlebuar--Suzuki--Varandas \cite{SSV}, and Hafouta \cite{H20,Ha23}.
They all use a nonstationary transfer operator
\[
\LL_x\colon C(Y_x)\to C(Y_{fx}), \quad
\psi_x(y)\mapsto\sum_{\overline{y}\in g_x^{-1}y}e^{\varphi_x(\overline{y})}\psi_x(\overline{y})
\]
acting on continuous functions which are nonvanishing along fibers. Denker--Gordin \cite{DG99} {used these transfer operators to study} Gibbs measures for fibred systems by showing that there is a family of conditional measures $\{\mu_x\}$ and a continuous function $\Phi\colon \sS\to\RR$ on the base such that for any $\psi\in C(\TT^2)$
\[\int_E\LL_x\psi_x{d\mu_{fx}}=\int_E\Phi(x)e^{-\varphi_x}d\mu_x\]
whenever $F|_E$ is invertible.

In \cite{H23}, the current author used this family of nonstationary operators to construct such a family of conditional measures for equilibrium states on non-uniformly expanding skew products{; i.e. for any $\psi\in C(X\times Y)$
\[
\int\psi d\mu_\varphi=\int\mu_x(\psi)d\hat{\mu}(x)
\]
where $\hat{\mu}$ is the projection of the equilibrium state $\mu_\varphi$ onto the base $\sS$. This allows us to treat each fiber as its own circle. We can then essentially apply McMullen's result to each fiber and glue them back together with another application of McMullen in the base. This allows us to build the desired conjugacy. We then use the eigendata of the transfer operators to establish the regularity of $F$.}
We review the necessary details in Section \ref{sec:skprod}.

We also show that the number of such orientation preserving conjugacies $H$ is equal to the degree of the maps. In \cite{McM}, McMullen establishes his bijection using {ideas from} Teich\"muller theory. Thus, there is an equivalence class $\{(f,\phi)\}$ of expanding map, conjugacy pairs determined by actions of Moduli space. We avoid any uses of Teich\"muller spaces. Instead, our approach involves use of Markov partitions to count the number of desired conjugacies. It is well-known that any expanding circle map of degree $d$ is semi-conjugate to a shift $\sigma$ on $d$ symbols $\Sigma_d=\{0,\ldots,d-1^\NN\}$. We show that the number of conjugacies is equivalent to permutations of the canonical Markov partition. See Corollary \ref{cor:} for details.

During the latter stages of this manuscript's preparation, an inquiry was raised concerning the regularity of the conjugacy. McMullen's result gives $C^0$ regularity for the conjugacy $\phi$. It is well-known that this can be easily upgraded to H\"older regularity. However, we cannot expect higher regularity without information about how periodic data is transformed by the conjugacy. For $f,g\in C^k\ (k\geq 2)$ expanding maps on $\sS$ that are topologically conjugated by $\phi$, if the Lyapunov exponents at corresponding periodic orbits are the same, then $\phi\in C^{k-1}$ (see Shub and Sullivan \cite{SS85}). We refer the reader to work by de la Llave \cite{dlL92} and Gogolev and Rodriguez-Hertz \cite{GRH21} for more on the rigidity of higher dimensional expanding maps.\\

%\textcolor{purple}{Climenhaga mentioned connection to some paper about derivatives on Cantor sets.\cite{BL13}.\cite{BL15}}}

The outline of this paper is as follows. In Section \ref{sec:setting}, we describe our setting in detail and define some important concepts. In Section \ref{sec:skprod}, we define the fiberwise transfer operators and state some results by the author that will be important for our analysis here. In Section \ref{sec:2torus}, we prove Theorem A using the nonstationary techniques described in Section 3. We also show that the resultant skew product $F\colon \TT^2\to\TT^2$ is $C^{1+\alpha}$. In Section \ref{sec:ntorus}, we describe an iterative process to extend the results of Section \ref{sec:2torus} to $\TT^n$ for any $n\in\NN$.

\subsection*{Acknowledgments}{The author thanks Vaughn Climenhaga for plenty of insightful discussions which motivated this project. The author also thanks James Marshall Reber for helpful discussions that contributed to the Markov partition arguments for the cardinality of the desired conjugacies.}

\section{Background}
\label{sec:setting}

\subsection{Existence and Uniqueness of Equilibrium States}
\label{sec:ESexun}

{In this section, we review the thermodynamic formalism of dynamical systems which was initiated in the seventies by the work of those like Bowen and Kolmogorov. For a thorough discussion of the entropy and more generally pressure of a continuous dynamical system, see  Walters \cite{WaltersET} or Peterson \cite{P83}.

Let $X$ be a compact metric space and $f\colon X\to X$ be continuous. The topological pressure of a potential $\varphi\colon X\to\RR$ is a weighted growth rate over $(n,\epsilon)$-separated sets. We wish to choose a distinguished $f$-invariant Borel probability measure $\mu\in \MM(X,f)$. The variational principle gives the following relationship between a system's topological pressure $P(\varphi)$ and free energy of its invariant measures. 

\begin{theorem}[Variational Principle]
    Given a continuous potential $\varphi\colon X\to \RR$,
    \[P(\varphi)=\sup\Big\{h_\nu(f)+\int\varphi\ d\nu \colon \nu\in\mathcal{M}(X,f) \Big\}\]
    where $h_\nu(f)$ denotes the measure-theoretic entropy of $f$.
\end{theorem}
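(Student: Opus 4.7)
The plan is to prove the two standard inequalities separately. For the upper bound, I would fix $\nu\in\MM(X,f)$ and a finite Borel partition $\alpha=\{A_1,\dots,A_k\}$ of $X$ with small diameter. Writing $\alpha^n=\bigvee_{j=0}^{n-1}f^{-j}\alpha$ and applying the elementary inequality $\sum_i p_i(a_i-\log p_i)\leq \log\sum_i e^{a_i}$ (valid for any probability vector $(p_i)$ and real numbers $a_i$) with $p_A=\nu(A)$ and $a_A=\sup_{A}S_n\varphi$, I obtain
\[
H_\nu(\alpha^n)+\int S_n\varphi\,d\nu\ \leq\ \log\sum_{A\in\alpha^n}e^{\sup_A S_n\varphi}.
\]
Dividing by $n$ and sending $n\to\infty$ yields $h_\nu(f,\alpha)+\int\varphi\,d\nu\leq P(\varphi,\alpha)$, where the right-hand side is the partition version of pressure. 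Choosing $\diam(\alpha)<\epsilon$ and comparing the sum over $\alpha^n$ to the weighted sum over a maximizing $(n,\epsilon)$-separated set, via uniform continuity of $\varphi$, gives $h_\nu(f)+\int\varphi\,d\nu\leq P(\varphi)$.

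For the lower bound I would follow Misiurewicz's construction. For each $\epsilon>0$ and $n\in\NN$, let $E_n$ be an $(n,\epsilon)$-separated set that nearly realizes the $n$-th partition sum $Z_n(\epsilon):=\sum_{x\in E_n}e^{S_n\varphi(x)}$, and form the weighted atomic measures
\[
\sigma_n=\frac{1}{Z_n(\epsilon)}\sum_{x\in E_n}e^{S_n\varphi(x)}\delta_x,\qquad \mu_n=\frac{1}{n}\sum_{k=0}^{n-1}f^k_*\sigma_n.
\]
By weak-$*$ compactness of $\MM(X)$, a subsequence converges to some $\mu$, which is $f$-invariant by a standard telescoping argument. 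Fixing a partition $\beta$ whose pieces have diameter less than $\epsilon/2$ and whose boundaries satisfy $\mu(\partial \beta)=0$ (arranged by a Fubini-type perturbation), a direct computation using the fact that each atom of $\beta^n$ contains at most one point of $E_n$ gives $H_{\sigma_n}(\beta^n)+\int S_n\varphi\,d\sigma_n=\log Z_n(\epsilon)$. Distributing this estimate along blocks of fixed length $q$ and exploiting the upper semicontinuity of entropy in the weak-$*$ topology (along with the boundary condition on $\beta$), I pass to the limit to obtain $h_\mu(f,\beta)+\int\varphi\,d\mu\geq \limsup_n \tfrac1n\log Z_n(\epsilon)$, and then let $\epsilon\to 0$.

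The main obstacle is the lower bound, and specifically the passage through the weak-$*$ limit: measure-theoretic entropy is only upper semicontinuous, so one cannot directly push the partition estimate for $\mu_n$ into the limit. The standard remedy is the block decomposition, which sacrifices a multiplicative factor that vanishes in the limit, combined with the careful choice of a partition adapted to $\mu$ via the null-boundary condition. The upper bound is comparatively soft and rests only on the concavity inequality and the covering-vs-separated comparison. Assembling both inequalities yields the variational principle.
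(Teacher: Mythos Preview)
The paper does not supply its own proof of the Variational Principle: it is quoted as a classical background result, with the reader referred to Walters \cite{WaltersET} and Peterson \cite{P83} for details. Your sketch is exactly the standard Walters--Misiurewicz argument found in those references, and the outline is correct. There is nothing to compare against in the paper itself, so your proposal is appropriate and matches the cited literature.
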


An \emph{equilibrium state} $\mu\in \mathcal{M}(X,f)$ for $\varphi$ is an invariant measure that achieves the supremum in the variational principle. These measures are a generalization of the measure of maximal entropy which is the equilibrium state for the potential $\varphi\equiv 0$.

We say that a potential $\varphi\colon X\to\RR$ is \emph{$\alpha$-H\"older continuous} for $\alpha>0$ if \[|\varphi|_\alpha:=\sup_{x\not=x'}\frac{|\varphi(x)-\varphi(x')|}{d(x,x')^\alpha}<\infty.\] We denote by $ C^\alpha(X)$ the Banach space of $\alpha$-$\holder$ continuous functions on $X$. Walters \cite{W78} proved the existence and uniqueness of equilibrium states for H\"older potentials on expanding maps using the Ruelle-Perron-Frobenius transfer operator acting on $ C^\alpha(X)$ via
\[
\LL_\varphi\psi(x)=\sum_{\overline{x}\in f^{-1}x}e^{\varphi(\overline{x})}\psi(\overline{x}).
\]
}

\begin{theorem}[RPF Theorem (See \cite{B08} \& \cite{W78})]
\label{thm:RPF}
Let $X$ be a compact, connected manifold and $f\colon X\to X$ is uniformly expanding. For any H\"older $\Phi\colon X\to\RR$, the following hold:
 \begin{enumerate}
\item \label{meas} There is a unique probability measure ${\nu}\in \MM(X)$ with the property that $\LL_\Phi^*{\nu}$ is a scalar multiple of ${\nu}$.
\item \label{func} There is a unique positive continuous function ${h}\in C^\alpha(X)$  with the property that $\LL_\Phi{h}$ is a scalar multiple of ${h}$ and $\int_X{h}d{\nu}=1$.
\item \label{value} The eigenvalues $\lambda$ associated to ${\nu}$ and ${h}$ are the same.
\item The unique equilibrium state for $\Phi$ is ${\mu}={h}{\nu}$.
 \end{enumerate}
\end{theorem}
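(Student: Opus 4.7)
The plan is to follow the classical proof using the Ruelle–Perron–Frobenius operator, with bounded distortion coming from the Hölder regularity of $\Phi$ plus uniform expansion of $f$.

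First I would establish the eigenmeasure $\nu$ and eigenvalue $\lambda$. The dual $\LL_\Phi^*$ maps the space of Borel probability measures $\MM(X)$ into the cone of positive measures, and the normalized map $\nu\mapsto \LL_\Phi^*\nu / (\LL_\Phi^*\nu)(1)$ sends $\MM(X)$ continuously into itself. Since $\MM(X)$ is convex and weak-$*$ compact, the Schauder–Tychonoff fixed point theorem yields a $\nu\in\MM(X)$ and a scalar $\lambda>0$ with $\LL_\Phi^*\nu=\lambda\nu$. I would then verify $\lambda = \lim_{n\to\infty} \tfrac{1}{n}\log \|\LL_\Phi^n 1\|_\infty$, so $\lambda$ is determined by $\Phi$ alone; this essentially recovers the topological pressure $P(\Phi)=\log\lambda$.

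Next I would produce the eigenfunction $h$. Using uniform expansion and the Hölder property of $\Phi$, I would verify the standard bounded distortion estimate for Birkhoff sums $S_n\Phi$ along inverse branches, which gives a uniform constant $C$ with $\LL_\Phi^n 1(x)/\LL_\Phi^n 1(x') \le C$ for all $x,x'$ and all $n$. This implies the sequence $h_n := \lambda^{-n}\LL_\Phi^n 1$ is uniformly bounded above and below and is $\alpha$-Hölder with bounds independent of $n$. Passing to a Cesàro average (or invoking Arzelà–Ascoli and the Birkhoff–Hopf contraction of $\LL_\Phi/\lambda$ on the cone of positive Hölder functions with log-oscillation bounded by some constant) yields a positive $h\in C^\alpha(X)$ with $\LL_\Phi h=\lambda h$, which I would then normalize by $\int h\,d\nu=1$. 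The equality of the eigenvalues in (3) is immediate from duality: $\lambda_h \int h\,d\nu = \int \LL_\Phi h\,d\nu = \int h\,d\LL_\Phi^*\nu = \lambda_\nu \int h\,d\nu$.

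To upgrade to uniqueness, I would show that the normalized operator $\widetilde\LL \psi := \lambda^{-1} h^{-1}\LL_\Phi(h\psi)$ is a Markov operator ($\widetilde\LL 1=1$) whose dual preserves the measure $\mu := h\nu$, and that $\widetilde\LL$ is a strict contraction on the cone of positive Hölder functions in the Birkhoff–Hopf projective metric. This gives exponential decay of correlations for $\mu$, which forces both the eigenmeasure and the eigenfunction to be unique up to scaling and shows that $\mu$ is $f$-invariant and ergodic (in fact mixing). Finally, to see that $\mu=h\nu$ is the unique equilibrium state for $\Phi$, I would verify the Gibbs property $\mu(B_n(x,\epsilon)) \asymp e^{-n\log\lambda + S_n\Phi(x)}$ on dynamical balls using bounded distortion, and apply the standard argument (as in Walters or Bowen) that any invariant measure satisfying this estimate is the unique equilibrium state: a direct comparison shows $h_\mu(f)+\int\Phi\,d\mu=\log\lambda=P(\Phi)$, and any other equilibrium state $\mu'$ would have to be absolutely continuous with respect to $\mu$ with a constant density by ergodicity.

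The main obstacle is the spectral gap / contraction step: without it one only obtains existence, and the uniqueness statements in (1), (2), and the uniqueness of the equilibrium state all rely on it. The cleanest route I would take is the projective contraction of $\widetilde\LL$ on an appropriate Hölder cone, because this simultaneously yields the uniqueness of $h$, the uniqueness of $\nu$, and the Gibbs property needed to identify $\mu$ as the unique equilibrium state, so once bounded distortion is in hand the rest of the proof organizes itself around that single analytic input.
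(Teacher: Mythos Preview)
The paper does not prove this theorem; it is stated as a classical background result with references to Bowen \cite{B08} and Walters \cite{W78}, so there is no proof in the paper to compare against. Your sketch is the standard Schauder fixed point plus bounded distortion plus cone contraction argument and is a correct outline of the classical proof found in those references.
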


{
We note that $\lambda=e^{P(\varphi)}$. Several times throughout this paper we will need to normalize our potential $\varphi$. We note that a potential $\psi$ is cohomologous (up to a constant) to $\varphi$ if there exists a $u\colon X\to\RR$ such that $\varphi-\psi=u-u\circ f+c$ for some $c\in\RR$. Bowen \cite{B08} shows that two potentials are cohomologous up to a constant if and only if they produce the same equilibrium states. Thus, if we replace $\varphi$ with 
\begin{equation}
    \label{eqn:cohomologuous}
    \Tilde{\varphi}=\varphi+\log h-\log h\circ f-\log\lambda,
\end{equation}
then $\LL_{\Tilde{\varphi}}\one=\one$ and the equilibrium state $\mu$ satisfies $\LL_{\Tilde{\varphi}}^*\mu=\mu$.}

{ It is worth mentioning that works in Teich\"muller theory like McMullen \cite{McM08} and He et al \cite{He25} use thermodynamic formalism to categorize metrics on hyperbolic surfaces using the pressure norm and Weil-Petersson metric on $C(X)$.}

\subsection{Semiconjugacies of Expanding Maps on $\sS$}
\label{ssec:McM}

Throughout this paper, we will denote the Lebesgue measure on $\sS$ by $\leb$. Let $\text{Exp}_d(\sS)$ denote the space of topological covering maps $f\colon \sS\to \sS$ of degree $d>1$ that preserve Lebesgue measure. Note that such maps are expanding maps; i.e. there exists a constant \( \lambda > 1 \) such that
\[
d(f(x), f(y)) \geq \lambda d(x, y) \quad \text{for all } x, y \in X.
\]
Shub showed that every $f\in \text{Exp}_d(\sS)$ is topologically conjugate to the model mapping $E_d(t)=d\cdot t\mod 1$ (see \cite{Sh69}).

\begin{figure}[ht!]
    \centering
    \[
    \begin{tikzcd}
        \sS \ar{r}{E_d} \arrow[d,"\phi"']
        & \sS \arrow[d, "\phi"]\\
        \sS \arrow[r, "f"']
        & \sS
    \end{tikzcd}
    \]
    \caption{Commutative diagram between $(\sS,E_d)$ and $(\sS,f)$.}
\end{figure}

McMullen proves the following.

\begin{theorem}
\textup{(McMullen \cite[Corollary 2.2 \& Theorem 1.1]{McM})}
\label{thm:McM}
    For any $f\in{\text{Exp}}_d(\sS)$ with $ d>1$, there is a $\phi\in \overline{\text{Exp}}_1(\sS)$, {unique up to the actions by automorphisms of $E_d$}, such that $\phi\circ f=E_d\circ \phi$ in $\overline{\text{Exp}}_d(\sS)$.
    Moreover, $\phi$ determines a unique invariant probability measure $\nu_{(f,\phi)}=\phi_*\leb$.
\end{theorem}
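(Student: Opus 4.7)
The plan is to produce $\phi$ directly from the topological conjugacy of Shub and then extract the $E_d$-invariant measure as a pushforward. Because $f\in\text{Exp}_d(\sS)$ is an expanding topological covering of degree $d>1$, Shub's theorem provides an orientation-preserving homeomorphism $h\colon\sS\to\sS$ intertwining the two dynamics: $f\circ h=h\circ E_d$. I take $\phi:=h^{-1}$, which is an orientation-preserving homeomorphism of degree one and hence lies in $\overline{\text{Exp}}_1(\sS)$; by construction it satisfies $\phi\circ f=E_d\circ\phi$, establishing existence.

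The $E_d$-invariant measure falls out for free. Since $f$ preserves $\leb$ by hypothesis,
\[
(E_d)_*(\phi_*\leb)=(E_d\circ\phi)_*\leb=(\phi\circ f)_*\leb=\phi_*(f_*\leb)=\phi_*\leb,
\]
so $\nu_{(f,\phi)}:=\phi_*\leb$ is an $E_d$-invariant Borel probability measure, uniquely determined by the pair $(f,\phi)$ since $\phi$ is a homeomorphism.

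For uniqueness of $\phi$ up to automorphisms of $E_d$, suppose $\phi_1,\phi_2\in\overline{\text{Exp}}_1(\sS)$ both semiconjugate $f$ to $E_d$. Then $\psi:=\phi_2\circ\phi_1^{-1}$ is a monotone degree one self-map of $\sS$ commuting with $E_d$, i.e.\ $\psi\circ E_d=E_d\circ\psi$. Lifting to $\widetilde\psi\colon\RR\to\RR$, this relation forces $\widetilde\psi(dx)=d\,\widetilde\psi(x)+k$ for a fixed integer $k$ (constant by continuity). Iterating pins $\widetilde\psi$ down on the backward orbit $\bigcup_n E_d^{-n}(\{0\})$ once its value at a single fixed point of $E_d$ is specified. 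Since this backward orbit is dense in $\sS$ and $\widetilde\psi$ is monotone and continuous, $\widetilde\psi$ is determined by that single value, and the only admissible degree one solutions are the translations $x\mapsto x+j/(d-1)$ for $j=0,\ldots,d-2$. These are exactly the $d-1$ rotations commuting with $E_d$, i.e.\ its automorphisms.

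The main obstacle I expect is the uniqueness step: while existence is immediate from Shub and invariance of $\phi_*\leb$ is a one-line pushforward calculation, classifying the commutant of $E_d$ in the semigroup of monotone degree one maps requires ruling out nonlinear solutions to the functional equation $\widetilde\psi(dx)=d\widetilde\psi(x)+k$. My plan is to handle this by iteration combined with a density/monotonicity argument as sketched above, exploiting that the backward orbit of any fixed point of $E_d$ is dense in $\sS$.
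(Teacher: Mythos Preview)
Your proof is correct and takes a different route from the paper. The paper does not prove this theorem directly---it is cited from McMullen---and instead offers the Markov-partition counting argument of Corollary~\ref{cor:} as its alternative to McMullen's Teichm\"uller-theoretic approach: a conjugacy must carry the canonical partition $\{A_k\}$ of $E_d$ to a Markov partition of $f$, with orientation-preserving conjugacies corresponding to cyclic relabelings of the $d$ partition elements. Your approach is more analytic: existence from Shub (as the paper also invokes), $E_d$-invariance of $\phi_*\leb$ by a one-line pushforward, and uniqueness by classifying the commutant of $E_d$ among monotone degree-one maps via the lifted functional equation $\widetilde\psi(dx)=d\,\widetilde\psi(x)+k$ together with density of $\bigcup_n E_d^{-n}(\{0\})$. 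This isolates the rigidity mechanism directly rather than through a relabeling heuristic. One point worth flagging: your count of $d-1$ orientation-preserving automorphisms of $E_d$ (the rotations by $j/(d-1)$, $j=0,\dots,d-2$) is the correct one and actually disagrees with the paper's Corollary~\ref{cor:}, which asserts $d$; the Markov-partition heuristic overcounts because not every cyclic shift of the $d$ intervals $\{A_k\}$ is induced by a self-conjugacy of $E_d$ (already for $d=2$ only the identity commutes with $E_2$, since any such $\psi$ must permute the $d-1$ fixed points of $E_d$).
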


We provide an alternate proof that avoids the use of Teich\"muller theory. Instead, we appeal to the Markov structure of expanding maps to  count the conjugacies that satisfy Theorem A. We say a finite collection $R=\{R_0,\ldots,R_{\ell-1}\}$ of subsets of $\sS$ form a \emph{Markov partition} for $f$ if
\begin{enumerate}
    \item The collection $R$ covers $\sS$; i.e. $\sS=\bigcup_{i=0}^{\ell-1}R_i$.
    \item The interiors of the $R_i$'s are pairwise disjoint; i.e. $\text{int}R_i\cap\text{int}R_j=\emptyset$.
    \item For any $i$, $R_i=\overline{\text{int}R_i}$
    \item (the Markov property): for all $i,j$ each point of $\text{int}(R_j)$ has the same number of inverse images in $R_i$ under $f$.
\end{enumerate}
The $R_i$'s are called the \emph{rectangles} for the partition. For more on Markov partitions for expanding circle maps, see \cite{S91}.

It is well-known that $(\sS,E_d)$ is semi-conjugacies to the full shift on $d$ symbols. We can define a canonical Markov partition 
\[ A_k := \left[\frac{k-1}{d}, \frac{k}{d} \right]\]
which gives a $(d\text{ to }1)$ coding of $E_d$ in $\{0,\ldots,d-1\}^\NN$; i.e.  
$$\sigma^n\omega_0\omega_1\ldots=\omega_1\omega_2\ldots \Longleftrightarrow  f^nx\in A_{\omega_n}.$$
The following corollary shows that topological conjugacies are determined by labelings of Markov partitions. Also that these partitions define invariant probability measures on $(\sS,E_d)$.

{
\begin{corollary}\label{cor:}
    Let $f\in Exp_d(\sS)$. Given an invariant probability measure $\nu_f$, there are $d$ orientation preserving and $d$ orientation reversing conjugacies to expanding $(\sS,f)$ for which $\nu=\phi_*\nu_f\in\mathcal{M}(\sS,E_d)$.
\end{corollary}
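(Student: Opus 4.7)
The plan is to identify each conjugacy with a labeling of a canonical Markov partition for $f$ and then enumerate the admissible labelings.

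First I would construct a canonical Markov partition for $f$. Since $f$ is a degree-$d$ expanding map, any fixed point $p$ of $f$ has exactly $d$ preimages $f^{-1}(p)=\{p_0,\ldots,p_{d-1}\}$ that split $\sS$ into $d$ closed arcs $R_0,\ldots,R_{d-1}$ in cyclic order, and each $R_i$ is mapped homeomorphically onto $\sS$ by $f$; this is a Markov partition whose endpoints are preimages of a fixed point. In parallel, the canonical Markov partition $\{A_0,\ldots,A_{d-1}\}$ of $E_d$ produces the standard coding $\pi_{E_d}\colon \Sigma_d\to \sS$ defined by $\pi_{E_d}(\omega)=\sum_{n\geq 0}\omega_n/d^{n+1}$.

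Next, any bijection $L\colon \{R_0,\ldots,R_{d-1}\}\to\{0,\ldots,d-1\}$ determines an itinerary map $\pi_f^L\colon \sS\to\Sigma_d$ by $\pi_f^L(x)_n=L(R_{i_n(x)})$ whenever $f^n(x)\in R_{i_n(x)}$, and the composition $\phi_L:=\pi_{E_d}\circ\pi_f^L$ is automatically a semi-conjugacy from $(\sS,f)$ to $(\sS,E_d)$. The key claim is that $\phi_L$ is a homeomorphism, hence a genuine conjugacy, precisely when $L$ is one of the $2d$ dihedral labelings of the cyclic arrangement of rectangles: the $d$ cyclic shifts $L(R_i)=i+k\bmod d$ preserving the order (orientation preserving) and the $d$ reversed labelings $L(R_i)=-i+k\bmod d$ (orientation reversing). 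Each such conjugacy yields an $E_d$-invariant probability measure $\nu=\phi_*\nu_f\in\MM(\sS,E_d)$.

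The heart of the argument is the dichotomy between admissible and inadmissible labelings. Each partition endpoint $p_j$ admits two symbolic codings, one as the right endpoint of $R_{j-1}$ and one as the left endpoint of $R_j$, and $\phi_L$ is continuous at $p_j$ only if both codings are sent by $\pi_{E_d}$ to the same point of $\sS$. Using that $f(p_j)=p$ is fixed together with the boundary identification $s\,\overline{d-1}=(s+1)\,\overline{0}$ in base-$d$ expansions, this matching condition holds at every $p_j$ simultaneously exactly for the $2d$ dihedral labelings; any other permutation produces at least one endpoint with disagreeing codings and hence a discontinuity. Exhaustiveness follows by pulling back: given any conjugacy $\phi$, the set $\phi^{-1}(\{A_0,\ldots,A_{d-1}\})$ is a Markov partition for $f$ with endpoints $f^{-1}(\phi^{-1}(0))$, matching our construction based at the fixed point $\phi^{-1}(0)$ up to a dihedral relabeling.

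The main obstacle is the combinatorial verification in the previous step: checking that precisely the $2d$ dihedral labelings produce consistent boundary codings at every partition endpoint, while every non-dihedral permutation fails at some endpoint. This requires careful bookkeeping of the cyclic arrangement together with the base-$d$ boundary ambiguities, but it is a finite combinatorial check once the canonical Markov partition is in place.
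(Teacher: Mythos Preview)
Your approach is essentially the same as the paper's: both arguments reduce the count to admissible relabelings of a Markov partition and conclude that exactly the $2d$ dihedral (cyclic-order-respecting) labelings yield genuine conjugacies. The paper simply asserts that ``$\phi$ must preserve the cyclic ordering of the intervals,'' while you justify this more explicitly via symbolic coding and the boundary-matching condition $s\,\overline{d-1}=(s+1)\,\overline{0}$; your exhaustiveness step (pulling back $\{A_k\}$ through an arbitrary conjugacy) is likewise a more concrete version of what the paper states. One small point to tidy: you base your partition at a chosen fixed point $p$, but for exhaustiveness you note the pulled-back partition is based at $\phi^{-1}(0)$, which need not equal $p$ --- you should either fix $p$ from the outset as the specific fixed point corresponding to $0$ under the conjugacy, or argue that the count is independent of which fixed point is chosen.
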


\begin{proof}
    Let $\{A_k\}$ be the canonical Markov partition for $E_d$. This partition induces a probability vector $p_\nu=(\nu(A_0),\ldots,\nu(A_{d-1}))$ for the measure $\nu$. 
    
    Let $\nu_f$ be some reference measure on $(\sS,f)$. Choose a conjugacy $\phi\colon \sS\to\sS$ for which $E_d\circ \phi=\phi\circ f$. The map $f\in Exp_d(\sS)$ has a Markov partition $\{B_k\}$. Topological conjugacies between $E_d$ and $f$ correspond to scalings and permutations on the $d$ elements of $A_k$. That is, the homeomorphism $\phi$ induces a map $\rho \in \text{Sym}(d)$ such that $B_k = \phi(A_{\rho(k)})$. Moreover,
    \begin{align*}
        \rho(p_\nu)
        &=(\gamma_0\phi^{-1}_*\nu(A_0)
        ,\ldots,
        \gamma_{d-1}\phi^{-1}_*\nu(A_{d-1}))\\
        &=(\gamma_0\nu(B_0)
        ,\ldots,
        \gamma_{d-1}\nu(B_{d-1}))
    \end{align*}
    for some $(\gamma_0,\ldots,\gamma_{d-1})$ with $\gamma_j\geq 0$ for all $0\leq j\leq d-l$. Therefore, different solutions to $\gamma_j\nu(B_j)=\nu_f(B_j)$ for each $j$, define different probability vectors and thus probability measures on $(\sS,f)$. 
    
    Note that it is not true that all $\rho \in \text{Sym}(d)$ can be realized via a topological conjugacy $\phi$. Indeed, $\phi$ must preserve the cyclic ordering of the intervals. Thus, conjugacies are uniquely determined by the first choice of symbols in the Markov partition and whether we rotate this partition clockwise or counter-clockwise. Hence, there are $d$ orientation preserving and $d$ orientation reversing such $\rho$. Hence, there are $2d$ conjugacies to Lebesgue preserving expanding maps.
\end{proof}

We note that all of these conjugacies would belong to the same equivalence class in the setting of \cite{McM}. That is, any two of these conjugacies must be rotations of each other. Thus, the following corollary agrees with McMullen's bijection. In particular, given an equilibrium state $\mu$ for $(\sS,E_d,\varphi)$, Corollary \ref{cor:} shows that there are $2d$ conjugacies that send $\mu$ to $\leb$. In Sections \ref{sec:2torus} and \ref{sec:ntorus}, we generalize this argument to count the number of such conjugacies on higher dimensional tori.}

\subsection{Statements of Main Results}

A map $F\colon \TT^n\to\TT^n$ is a \emph{linear expanding endomorphism} if there is an $n\times n$ integer matrix $A$ whose eigenvalues all have absolute value greater than one such that { $F(\Vec{x})=A(\Vec{x}) \mod \ZZ^n$. The model map $E_d$ is the linear endomorphism which corresponds to the diagonal matrix with all positive entries equal to $d$.} Shub \cite[Proposition 6]{Sh69} showed that every expanding $F\colon \TT^n\to\TT^n$ is topologically conjugate to a linear expanding endomorphism. {We equip} $(\TT^n,E_{d})$ with a H\"older potential $\varphi\colon \TT^n\to\RR$. By Theorem \ref{thm:RPF}, there exists a unique equilibrium state $\mu$ for $\varphi$ on $(\TT^n,E_d)$. Our main result is as follows.

\begin{theorem}\label{thm:gen}
    Fix $n\geq 2$ and consider $E_d\colon \TT^n\to\TT^n$. Let $\varphi\colon\TT^n\to\RR$ be a H\"older continuous function and $\mu$ its corresponding equilibrium state. Then there exists a conjugacy $H\colon\TT^n\to\TT^n$ to a $C^{1+\alpha}$ expanding skew product $F$ that transports the equilibrium state $\mu$ to Lebesgue measure. Moreover, there are only $d$ orientation preserving conjugacies that satisfy this.
\end{theorem}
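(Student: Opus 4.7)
The plan is to first establish the $n=2$ case (Theorem A) and then iterate the skew product construction to obtain the $\TT^n$ result. For $n=2$, I would decompose $E_d\colon\TT^2\to\TT^2$ as a skew product with base $\sS$ and fiber $\sS$, both acted on by $E_d$, and use the results of Section \ref{sec:skprod} to disintegrate the (unique) equilibrium state as $\mu_\varphi=\int\mu_x\,d\hat\mu(x)$, where $\hat\mu$ is the projection of $\mu_\varphi$ onto the base and each $\mu_x$ is obtained as an eigenmeasure of the nonstationary transfer operator $\LL_x$. Normalizing $\varphi$ via (\ref{eqn:cohomologuous}) makes the family $\{\mu_x\}$ well-behaved with respect to the dynamics.

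Next I would build the conjugacy in two stages. For the base, Theorem \ref{thm:McM} gives a conjugacy $h\colon\sS\to\sS$ with $h\circ E_d = f\circ h$ and $h_*\hat\mu=\leb$ for some $f\in\text{Exp}_d(\sS)$. For each fiber, the measure $\mu_x$ is an $E_d$-invariant probability measure on $\sS\cong Y_x$, so Theorem \ref{thm:McM} again yields a fiber conjugacy $k_x\colon\sS\to\sS$ with $(k_x)_*\mu_x=\leb$ and $E_d = g_{h(x)}^{-1}\circ k_{E_d x}\circ E_d\circ k_x^{-1}\circ g_{h(x)}$ rearranged to produce an expanding fiber map $g_{h(x)}$ preserving $\leb$. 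I would then define $H(x,y):=(h(x),k_x(y))$ and $F(u,v):=(f(u),g_u(v))$, and verify from the disintegration that $H\circ E_d=F\circ H$ and $H_*\mu_\varphi=\leb_{\TT^2}$.

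The hard part will be establishing the regularity of $H$ and hence $F$. Since $\varphi\in C^\alpha(\TT^2)$, the nonstationary RPF theory recalled in Section \ref{sec:skprod} gives $\alpha$-Hölder dependence of the fiber eigendata (densities and normalizing eigenvalues) on the base variable $x$. I expect this to transfer, via the cumulative distribution construction underlying McMullen's conjugacy, to $\alpha$-Hölder dependence of $k_x$ on $x$ and $C^{1+\alpha}$ dependence in $y$. Differentiating the conjugacy relation $F\circ H=H\circ E_d$ and using that $E_d$ is linear then yields the $C^{1+\alpha}$ regularity of the skew product $F$. Establishing the joint Hölder regularity of $(x,y)\mapsto k_x(y)$ with the right exponents is the main technical obstacle, since one must control how the fiber measures $\mu_x$ and their cumulative distribution functions vary in $x$.

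For the count, I would use Corollary \ref{cor:} on the base to obtain $d$ orientation-preserving choices for $h$, and then argue that once $h$ is fixed, the disintegration together with the requirement $H_*\mu_\varphi=\leb$ forces $(k_x)_*\mu_x=\leb$ for $\hat\mu$-a.e.\ $x$. Among the $d$ fiber choices provided by Corollary \ref{cor:} for each $x$, orientation preservation plus the continuity of $x\mapsto k_x$ will select a unique fiber conjugacy consistent with the one already chosen at $h(x)$ via the skew relation. Thus the $d^2$ na\"ive count collapses to $d$, and the analogous argument iterated through $\TT^n=\sS\times\TT^{n-1}$ (applying the $n=2$ result with fiber $\TT^{n-1}$ and invoking the inductive hypothesis fiberwise) propagates both the $C^{1+\alpha}$ regularity and the count of $d$ orientation-preserving conjugacies to arbitrary $n\ge 2$.
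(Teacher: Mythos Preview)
Your construction of $H$ follows the paper's two-stage skew-product approach (Theorem \ref{thm:torusConj}): a base conjugacy $\hat\phi$ via McMullen, then fiberwise conjugacies $\phi_x(y)=\mu_x([0,y])$, assembled into $H=H_Y\circ H_X$. The induction to $\TT^n$ is also the paper's scheme (Theorem \ref{thm:ntorus}).

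The genuine gap is in your regularity argument. You propose to show $k_x$ is $C^{1+\alpha}$ in $y$ and $\alpha$-H\"older in $x$, and then differentiate the relation $F\circ H=H\circ E_d$ to obtain the regularity of $F$. But $H$ is typically \emph{not} differentiable: $k_x(y)=\mu_x([0,y])$ is $C^1$ in $y$ only if $\mu_x\ll\leb$, which fails for generic H\"older potentials (equilibrium states are usually singular with respect to Lebesgue). The paper's own Example \ref{ex:} exhibits a conjugacy $H$ built from a Weierstrass-type series that is at best $C^{x|\log x|}$, so differentiating the conjugacy relation is not available. The same obstruction already appears in the base: $\hat\phi$ is only H\"older.

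The paper circumvents this by never differentiating $H$. Instead, Lemmas \ref{lem:transDeriv} and \ref{lem:fibDeriv} compute $f'(x)$ and $g_x'(y)$ \emph{directly} as limits of measure ratios, e.g.\ $f'(x)=\lim_{\delta\to 0}\hat\mu(E_d I)/\hat\mu(I)$, and evaluate them via the eigendata of $\LL_\Phi$ and $\LL_x$ to obtain $f'(x)=e^{-\tilde\Phi(\hat\phi^{-1}(x))}$ and $g_x'(y)=e^{-\tilde\varphi_x(\phi_x^{-1}(y))}$. The point is that $f$ and $g_x$ are $C^{1+\alpha}$ even though the conjugacies producing them are not; the derivative comes from the Gibbs structure of $\hat\mu$ and $\mu_x$, not from differentiability of $\hat\phi$ or $\phi_x$. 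Theorem \ref{thm:det} then assembles these into $|\det DF|=e^{-\tilde\varphi\circ H^{-1}}$.

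On the count: the paper does not use your continuity-collapse mechanism. It argues via Markov partitions on $\TT^2$ directly (Theorem \ref{thm:Conj}), obtaining $(\deg f)(\deg g_x)$ orientation-preserving conjugacies as cyclic relabelings, with this product playing the role of the degree in the statement. Your claim that continuity of $x\mapsto k_x$ together with the skew relation forces a unique fiber choice once the base is fixed is plausible but is not the paper's reasoning, and would require an argument the paper does not provide.
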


\section{Skew Products}
\label{sec:skprod}

Let $X$ and $Y$ be compact, connected manifolds. We will refer to $X$ as the base and $\{Y_x:=\{x\}\times Y\}_{ x\in X}$ as the fibers of the product. Note that each fiber $Y_x$ can be identified with $Y$. We will make the necessary distinctions as needed. Denote by $d$ the $L^1$ distance on $\skprod$ and by $\pi_X$ and $\pi_Y$ the natural projection maps from $\skprod$ onto $X$ and $Y_x$, respectively. 

Let $F$ be a continuous skew product on $X\times Y$; i.e. there are continuous maps $f\colon X\to X$ and $\{g_x\colon Y_x\to Y_{fx}|\ x\in X\}$ such that \[F(x,y)=(f(x),g_x(y)). \] 
To understand the dynamics of $F$ on $\TT^2$, define a family of maps $$g_x^k:=\ g_{f^{k-1}x}\circ\dots\circ g_x\colon Y_x\to Y_{f^nx}$$ where $k\geq 0$ and $x\in X$.

For each $k\geq 0$, define the $k^{th}$-Bowen metric as \[d_k((x,y),(x',y'))=\max_{0\leq i\leq k}\{d(F^i(x,y),F^i(x',y'))\}.\] Denote the $k^{th}$-Bowen ball centered at $(x,y)$ of radius $\delta>0$ by $$B_k((x,y),\delta)=\{(x',y')\colon d_k((x,y),(x',y'))<\delta\}.$$

\subsection{Prior Results}

Observe that we can decompose the product transfer operator for skew products into \[\LL_\varphi\psi(x,y)=\sum_{(\overline{x},\overline{y})\in F^{-1}(x,y)}e^{\varphi(\overline{x},\overline{y})}\psi(\overline{x},\overline{y})=\sum_{\overline{x}\in f^{-1}x}\sum_{\overline{y}\in g_{\overline{x}}^{-1}y}e^{\varphi(\overline{x},\overline{y})}\psi(\overline{x},\overline{y}).\] This gives rise to a fiberwise transfer operator on the fibers of $\skprod$.

We disintegrate $\varphi$ into a family of fiberwise potentials $\{\varphi_x(\cdot)=\varphi(x,\cdot)\}_{x\in X}$. For every $x\in X$, let $\LL_x\colon C(Y_x)\to C(Y_{f(x)})$ be defined by \[
    \LL_x\psi_x(y)=\sum_{\overline{y}\in g_x^{-1}y}e^{\varphi_x(\overline{y})}\psi_x(\overline{y})
\] for any $\psi\in  C(\skprod)$. We are interested in iterating this fiberwise operator. Thus, defin \[\LL_x^k=\LL_{f^{k-1}x}\circ\cdots\circ \LL_x\colon C(Y_x)\to C(Y_{f^nx}).\] Along with each of these fiberwise operators, we define their corresponding duals $\LL_x^*$ by sending a probability measure $\eta\in \mathcal{M}(Y_{f(x)})$ to the measure $\LL_x^*\eta\in \mathcal{M}(Y_x)$ defined by the relationship that for any $\psi\in  C(\skprod)$, one has \[\int \psi\,d(\LL_x^*\eta)=\int \LL_x\psi\,d\eta.\]

In \cite[Theorem A]{H23}, the current author built on ideas of Denker and Gordin \cite{DG99} in order to prove Theorem \ref{thm:conds} below. The motivation was to produce conditional measures for equilibrium states for the doubling map on the 2-torus, $\mathbbm{T}^2$, which allows us to approach the two dimensional case one dimension at a time. 

\begin{theorem}[Hemenway \cite{H23}]\label{thm:conds}
Let $X$ and $Y$ be compact connected Riemannian manifolds and $(\skprod,F)$ be a Lipschitz expanding skew product. Let $\varphi$ be {almost constant} H\"older continuous potential on $\skprod$ and $\mu$ be its corresponding equilibrium state. Then the following are true. \begin{enumerate}
    \item \label{itm:pot}The potential $\Phi(x)=\lim_{k\to\infty}\log{\LL_x^{k+1}\mathbbm{1}(y)}\big/{\LL_{fx}^{k}\mathbbm{1}(y)}$ exists (independent of choice of $y\in Y$), is H\"older continuous, and satisfies $P(\varphi)=P(\Phi)$.
    \item \label{itm:baseES}The unique equilibrium state for $\Phi$ is $\hat{\mu}=\mu\circ\piXI$.
    \item  There is a unique family of measures $\{\nu_x\colon x\in X\}$ such that $\nu_x(Y_x)=1$ and $$\LL_x^*\nu_{f(x)}=e^{\Phi(x)}\nu_x.$$
    \item The map $x\mapsto\nu_x$ is weak$^*$-continuous.
    \item \label{itm:conds} Let $\hat{h}$ and $\hat{\nu}$ be the eigendata of $\LL_\Phi$, i.e. $\LL_\Phi^*\hat{\nu}=e^{P(\Phi)}\hat{\nu}$, $\LL_\Phi\hat{h}=e^{P(\Phi)}\hat{h}$, and $\int\hat{h}d\hat{\nu}=1$. Then the measures $\mu_x={h(x,\cdot)}/{\hat{h}(x)}\nu_x$ are probability measures on $Y_x$ such that \[\mu=\int_X\mu_x\ d\hat{\mu}(x).\]
\end{enumerate} 
\label{sec:ES}
\end{theorem}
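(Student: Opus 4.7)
The plan is to build everything out of the nonstationary fiberwise operators $\LL_x$ and then use the RPF Theorem \ref{thm:RPF} on the base system $(X,f,\Phi)$ to transfer information back to the full skew product.

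For part (1), I would rewrite the ratio as $\LL_x^{k+1}\one(y)/\LL_{fx}^k\one(y) = \LL_{fx}^k(\LL_x\one)(y)/\LL_{fx}^k(\one)(y)$, which is a quotient of two iterates of the \emph{same} operator applied to different initial functions. The Lipschitz expanding hypothesis on $F$ together with H\"older continuity of $\varphi$ produces a Bowen-type bounded distortion estimate showing that this ratio converges geometrically in $k$, uniformly in $y$, yielding a well-defined limit $e^{\Phi(x)}$ independent of $y$. H\"older regularity of $\Phi$ in $x$ follows from applying the same distortion argument to nearby $x,x'$: corresponding fiber inverse branches stay exponentially close in the Bowen metric, and the Birkhoff sums of $\varphi$ they compute differ by at most a constant times $d(x,x')^\alpha$.

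For parts (3) and (4) I would construct the eigenmeasures by a normalization/cluster-point procedure. Fix a continuous reference family of probability measures $\eta_x$ on each fiber (say fiberwise Lebesgue) and set $\nu_x^{(k)} = e^{-S_k\Phi(x)}(\LL_x^k)^*\eta_{f^kx}$, where $S_k\Phi(x)=\sum_{j=0}^{k-1}\Phi(f^jx)$. Part (1) guarantees that the exponential normalization matches operator growth, so $\{\nu_x^{(k)}\}$ has total mass bounded between positive constants and any weak-$*$ limit $\nu_x$ is a probability measure satisfying $\LL_x^*\nu_{fx}=e^{\Phi(x)}\nu_x$. Uniqueness follows because any two such families produce the same value $\int\psi\,d\nu_x=\lim_k e^{-S_k\Phi(x)}(\LL_x^k)^*\psi(y)$ on continuous $\psi$, independently of $y\in Y_{f^kx}$ by distortion. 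Weak-$*$ continuity of $x\mapsto\nu_x$ then reduces to continuity of $\Phi$ and of the family $x\mapsto\LL_x$.

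For the pressure equality in (1) and the identification (2), I would decompose $\LL_\varphi^k\one(x,y)=\sum_{\bar x\in f^{-k}x}\LL_{\bar x}^k\one(y)$ and compare to $\LL_\Phi^k\one(x)=\sum_{\bar x\in f^{-k}x}e^{S_k\Phi(\bar x)}$. Part (1) gives $\LL_{\bar x}^k\one(y)\asymp e^{S_k\Phi(\bar x)}$ with constants uniform in $k$ and $\bar x$, so $P(\varphi)=P(\Phi)$. Pushing $\mu$ down by $\piX$ yields an $f$-invariant probability $\hat\mu$ whose free energy under $\Phi$ equals $P(\varphi)=P(\Phi)$ via an Abramov--Rokhlin decomposition of $h_\mu(F)$ into base and fiber entropy, with the fiber contribution absorbing the gap $\int\varphi\,d\mu-\int\Phi\,d\hat\mu$ through the $\nu_x$ eigenrelation. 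Uniqueness of the equilibrium state for the H\"older potential $\Phi$ on the expanding base (Theorem \ref{thm:RPF}) then identifies $\hat\mu$ as the unique one.

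The main obstacle is part (5), the global disintegration. I would invoke Theorem \ref{thm:RPF} twice: on $(X,f,\Phi)$ to obtain $\hat h,\hat\nu$ with $\hat\mu=\hat h\hat\nu$, and on $(\skprod,F,\varphi)$ to obtain $h,\nu$ with $\mu=h\nu$. Setting $\mu_x=h(x,\cdot)/\hat h(x)\cdot\nu_x$ reduces the problem to proving $\nu=\int\nu_x\,d\hat\nu(x)$. To this end I would check that the candidate measure $\int\nu_x\,d\hat\nu(x)$ projects to $\hat\nu$ under $\piX$ (immediate from $\nu_x(Y_x)=1$) and satisfies $\LL_\varphi^*\bigl(\int\nu_x\,d\hat\nu(x)\bigr)=e^{P(\varphi)}\int\nu_x\,d\hat\nu(x)$, by combining the fiberwise eigenrelation $\LL_x^*\nu_{fx}=e^{\Phi(x)}\nu_x$ with the base eigenrelation $\LL_\Phi^*\hat\nu=e^{P(\Phi)}\hat\nu$ and Fubini. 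Uniqueness of the eigenmeasure in Theorem \ref{thm:RPF} then forces $\nu=\int\nu_x\,d\hat\nu$, and multiplying by $h/\hat h$ gives the stated disintegration $\mu=\int\mu_x\,d\hat\mu$.
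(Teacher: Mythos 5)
This theorem is imported verbatim from \cite{H23}: the paper states it as a prior result and gives no proof, so there is no in-paper argument to compare against. Your outline reconstructs the standard nonstationary Ruelle--Perron--Frobenius argument in the Denker--Gordin style, which is, by the paper's own description, exactly the method of the cited work, and each step is sound --- rewriting the ratio as $\LL_{fx}^k(\LL_x\one)/\LL_{fx}^k(\one)$ and using bounded distortion for item (1), normalized pullbacks for the family $\{\nu_x\}$, the identity $\LL_\varphi^k\one(x,y)=\sum_{\bar x\in f^{-k}x}\LL_{\bar x}^k\one(y)\asymp\LL_\Phi^k\one(x)$ for the pressure equality, and the eigenmeasure-uniqueness identification of $\int\nu_x\,d\hat\nu$ for the disintegration; the only details left implicit are the normalization of the weak-$*$ cluster points to unit mass and the verification that $\int h(x,\cdot)\,d\nu_x=\hat h(x)$ (which is what makes each $\mu_x$ a probability measure), both of which follow from the same uniqueness arguments you already invoke.
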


\section{Conjugacies on $\TT^2$}
\label{sec:2torus}

Let $\varphi\colon\TT^2\to\RR$ be H\"older on $(\TT^2,E_d)$ and consider its unique equilibrium state $\mu$. 
Consider the transverse measure $\hat{\mu}=\mu\circ\pi_X^{-1}$ and the conditional measures $\{\mu_x\}$ given by \cite{H23}. Using McMullen \cite{McM}, we get a conjugacy $\hat{\phi}\colon \sS\to\sS$ to a uniformly expanding system $(\sS,f)$ for which $\hat{\phi}$ pushes Lebesgue $\leb$; i.e.
\begin{equation}
\label{eqn:baseConj}
    f\circ \hat{\phi} = \hat{\phi} \circ E_d
    \quad\text{ and }\quad
    \hat{\mu}=\hat{\phi}_*\leb.
\end{equation}

{
Recall from equation \eqref{eqn:cohomologuous} that we can normalize the potential $\Phi\colon \sS\to \RR$ from item \eqref{itm:pot} of Theorem \ref{thm:conds}. The following lemma shows that this normalization $\Tilde{\Phi}$ defined the derivative of $f$. Therefore, the regularity of $\Tilde{\Phi}$ implies that $f$ is $C^{1+\alpha}$.
}

\begin{lemma}\label{lem:transDeriv}
    If $f$ is the uniformly expanding map given by equation \eqref{eqn:baseConj}, then {$f$ has continuous derivative} $f'(x)=e^{-\Tilde{\Phi}(\hat{\phi}^{-1}({x}))}$.
\end{lemma}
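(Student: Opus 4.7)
The plan is to deduce the formula for $f'$ from the conformality of $\hat{\mu}$ under the normalized transfer operator and to transport it through the conjugacy $\hat{\phi}$. By construction of $\tilde{\Phi}$ in equation \eqref{eqn:cohomologuous} together with Theorem \ref{thm:conds}, we have $\LL^*_{\tilde{\Phi}}\hat{\mu}=\hat{\mu}$ on $(\sS,E_d)$. Unfolding this identity against indicator functions of Borel sets $A\subset\sS$ on which $E_d|_A$ is injective yields the Jacobian identity
\[ \hat{\mu}(E_d A) = \int_A e^{-\tilde{\Phi}(y)}\,d\hat{\mu}(y), \]
so that $e^{-\tilde{\Phi}}$ is the Jacobian of $E_d$ with respect to $\hat{\mu}$.

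Next I would transport this identity via $\hat{\phi}$. For any $B\subset\sS$ on which $f|_B$ is injective, setting $A=\hat{\phi}^{-1}(B)$ and applying the conjugacy relation of equation \eqref{eqn:baseConj} gives $\hat{\phi}(E_d A)=fB$. Substituting $\hat{\mu}=\hat{\phi}_*\leb$ on both sides of the Jacobian identity (which turns integration against $d\hat{\mu}$ into integration against $d\leb$ via the change of variables $y=\hat{\phi}^{-1}(x)$) then produces
\[ \leb(f B) = \int_B e^{-\tilde{\Phi}(\hat{\phi}^{-1}(x))}\,d\leb(x). \]
The integrand is $\alpha$-H\"older on $\sS$, being the composition of the $\alpha$-H\"older $\tilde{\Phi}$ with the H\"older homeomorphism $\hat{\phi}^{-1}$ (whose H\"older regularity follows from positivity and H\"older continuity of the density of $\hat{\mu}$, which is standard Ruelle--Bowen theory for the expanding map $E_d$).

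Since this identity holds for every small injective branch $B\subset\sS$ and $f$ is a local homeomorphism on each such branch, applying the fundamental theorem of calculus branch-by-branch upgrades the integrated identity to pointwise differentiability, yielding $f'(x)=e^{-\tilde{\Phi}(\hat{\phi}^{-1}(x))}$. Continuity of $f'$ is then immediate from continuity of $\tilde{\Phi}$ and $\hat{\phi}^{-1}$, which in turn gives the $C^{1+\alpha}$ regularity invoked later in Theorem \ref{thm:A}. The main subtlety is carefully matching the direction of $\hat{\phi}$ in the change of variables so that the composition ends up as $\tilde{\Phi}\circ\hat{\phi}^{-1}$ rather than $\tilde{\Phi}\circ\hat{\phi}$; once this orientation is pinned down, lifting the integrated Jacobian identity to a classical pointwise derivative is routine because the Jacobian on the right-hand side is continuous and positive.
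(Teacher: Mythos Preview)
Your argument is correct and reaches the same conclusion, though the presentation differs from the paper's. The paper computes the difference quotient $(f(x+\delta)-f(x))/\delta$ directly: it rewrites numerator and denominator as $\hat{\mu}(E_d I)$ and $\hat{\mu}(I)$ via the conjugacy, expands $\hat{\mu}=\hat{h}\hat{\nu}$, applies the eigenmeasure relation $\LL_\Phi^*\hat{\nu}=e^{P(\Phi)}\hat{\nu}$ to the denominator, and then uses continuity of $\hat{h}$ and $\Phi$ to evaluate the limit, recombining the eigendata into $e^{-\tilde{\Phi}}$ only at the very end. You instead work with the normalized potential from the outset, so that the conformality $\LL_{\tilde{\Phi}}^*\hat{\mu}=\hat{\mu}$ yields the Jacobian identity $\hat{\mu}(E_dA)=\int_A e^{-\tilde{\Phi}}\,d\hat{\mu}$ in one step, and after transporting through $\hat{\phi}$ the fundamental theorem of calculus delivers $f'$ pointwise. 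This is a clean repackaging of the same computation: the eigendata manipulation the paper carries out by hand is absorbed into the cohomology normalization \eqref{eqn:cohomologuous}.

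One correction to your side remark: you justify the H\"older regularity of $\hat{\phi}^{-1}$ via ``positivity and H\"older continuity of the density of $\hat{\mu}$'', but the equilibrium state $\hat{\mu}$ on $(\sS,E_d)$ is generically singular with respect to Lebesgue and has no such density. The H\"older regularity of $\hat{\phi}^{\pm1}$ instead follows from the Gibbs property, which gives two-sided power-law bounds $c|t-s|^{\gamma_+}\leq\hat{\mu}([s,t])\leq C|t-s|^{\gamma_-}$ with exponents determined by $\sup\tilde{\Phi}$ and $\inf\tilde{\Phi}$. This does not affect the lemma as stated (continuity of $f'$ only needs $\hat{\phi}^{-1}$ to be a homeomorphism), but it matters for the $C^{1+\alpha}$ conclusion you invoke at the end.
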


\begin{proof}
Fix $x\in \sS$ and let $\delta>0$. Note that 
\[f(x+\delta)-f(x)=\leb\bigg(\big[f(x),f(x+\delta)\big)\bigg)=\leb\bigg(f\circ\big[\hat{\phi}(\overline{x}),\hat{\phi}(\overline{x}+\overline{\delta})\big)\bigg)=\hat{\mu}\bigg(E_d\big([\overline{x},\overline{x}+\overline{\delta})\big)\bigg)  \] where $\overline{\delta}>0$ is given by the continuity of $\hat{\phi}$. Also, \[\delta=\leb\bigg(\big[{x},{x}+{\delta}\big)\bigg)=\leb\bigg(\big[\hat{\phi}(\overline{x}),\hat{\phi}(\overline{x}+\overline{\delta})\big)\bigg)=\hat{\mu}\bigg(\big[\overline{x},\overline{x}+\overline{\delta}\big)\bigg).\]
For convenience, we let $I:=[\overline{x},\overline{x}+\overline{\delta})$. Then 
\begin{align*}
        f'(x)&=\lim_{\delta\to 0}\frac{f(x+\delta)-f(x)}{\delta}=\lim_{\delta\to 0}\frac{\hat{\mu}\bigg(E_d\big([\overline{x},\overline{x}+\overline{\delta})\big)\bigg)}{\hat{\mu}\bigg(\big[\overline{x},\overline{x}+\overline{\delta}\big)\bigg)}=\frac{\displaystyle \int\mathbbm{1}_{E_d(I)}\hat{h}(z)d\hat{\nu}(z)}{\displaystyle 
 \int\mathbbm{1}_I(z)\hat{h}(z)d\hat{\nu}(z)}
\end{align*}

{
Items \eqref{itm:pot} and \eqref{itm:baseES} in Theorem \ref{thm:conds} implies that there is an eigenmeasure $\hat{\nu}\in\mathcal{M}(X)$ such that $\LL_\Phi^*\hat{\nu}=e^{P(\Phi)}\hat{\nu}$. So \begin{align*}
    \int_I \hat{h}(z)d\hat{\nu}(z)
    &=e^{-P(\Phi)}\int\LL_\Phi(\mathbbm{1}_I\hat{h})(z)d\hat{\nu}(z)\\
    &=e^{-P(\Phi)}\int\sum_{z'\in E_d^{-1}z\cap I} e^{\Phi(z')}\hat{h}(z')d\hat{\nu}(z)\\
    &=e^{-P(\Phi)}\int_{E_d(I)} e^{\Phi(E_d|_{I}^{-1}z)}\hat{h}(E_d|_{I}^{-1}z)d\hat{\nu}(z)
\end{align*}
where the last equality holds since $E_d$ expanding implies $E_d^{-1}z\cap I$ is a singleton.
}

Let $\varepsilon>0$. Note that since $\hat{h}$ and $\Phi$ are continuous, there is a $\delta_0$ such that if $\delta<\delta_0$, we have
\[e^{-\varepsilon}\leq\frac{\hat{h}(z)}{\hat{h}(E_d(\overline{x}))} \leq e^{\varepsilon},\quad |\Phi(E_d|_I^{-1}z)-\Phi(\overline{x})|\leq \varepsilon,\quad \text{and}\ e^{-\varepsilon}\leq\frac{\hat{h}(E_d|_I^{-1}z)}{\hat{h}(\overline{x})}\leq e^{\varepsilon}.\]
Therefore,
\[f'(x)=\lim_{\delta\to 0}\frac{\displaystyle \int_{E_d(I)}\hat{h}(z)d\hat{\nu}(z)}{\displaystyle \int_I\hat{h}(z)d\hat{\nu}(z)}\leq\frac{\hat{h}(E_d(\overline{x}))e^\varepsilon}{\hat{h}(\overline{x})e^{-P(\Phi)}e^{\Phi(\overline{x})}e^{-2\varepsilon}}=\frac{e^{P(\Phi)}\hat{h}(E_d(\overline{x}))}{\hat{h}(\overline{x})e^{\Phi(\overline{x})}}e^{3\varepsilon}.\] A similar inequality shows that \[\frac{e^{P(\Phi)}\hat{h}(E_d(\overline{x}))}{\hat{h}(\overline{x})e^{\Phi(\overline{x})}}e^{-3\varepsilon}\leq f'(x) \leq \frac{e^{P(\Phi)}\hat{h}(E_d(\overline{x}))}{\hat{h}(\overline{x})e^{\Phi(\overline{x})}}e^{3\varepsilon}.\] This holds for all $\varepsilon>0$ so we get $f'(x)=e^{-\Tilde{\Phi}(\hat{h}(\overline{x}))}$.
\end{proof}

\subsection{Construction of conjugacy on $\TT^2$}

In this section, we use Theorem \ref{thm:conds} to build the conjugacy on $\TT^2$. 

Let $f$ be given by equation \eqref{eqn:baseConj}. For any $x\in\sS$, let $F_x\colon\{x\}\times\sS\to\{E_{d}(x)\}\times\sS$ be the $\times d$ map in the second coordinate. Then {$E_d(x,y)=(dx,F_x(y))\mod \TT^2$}. Let $H_X\colon \TT^2\to\TT^2$ be defined by $H_X(x,y)=(\hat{\phi}(x),y)$. Then $\Tilde{F}(x,y)=(f(x),F_x(y))$ satisfies $\Tilde{F}\circ H_X=H_X\circ E_d$ and $\Tilde{\mu}=\int\mu_xd\leb(x)$. This conjugacy organizes the fibers based on the new nonlinear map $f$.

\begin{figure}[h!]
    \centering
    \[\begin{tikzcd}
        \{x\}\times\sS \arrow[r, "F_x"] \arrow[d,"\phi_x"']
        & \{E_{d}(x)\}\times\sS \arrow[d, "\phi_{f(x)}"]\\
        \{x\}\times \sS \arrow[r, "g_x"']
        & \{f(x)\}\times \sS
    \end{tikzcd}\]

    \caption{Commutative diagram between $(Y_x,F_x)$ and $(Y_x,g_x)$.}
    \label{fig:fibers}
\end{figure}

Now we apply the same idea to translate $\Tilde{F}$ to a new nonlinear skew product $F\colon\TT^2\to\TT^2$. Analogous techniques to those above define a family of coordinate changes $\{\phi_x\colon Y_x\to Y_{x}\}$ which sends $(\sS,E_d,\mu_x)$ to a uniformly expanding map $g_x\colon \{x\}\times \sS\to\{f(x)\}\times\sS$ such that $\leb(A)=\mu_x(\phi_xA)$. {For each fiber, define $\phi_x(y)=\mu_x([0,y])$.} Note that $\phi_x(0)=0$ and $\phi_x(1)=1$. Thus, $\phi_x$ is a homeomorphism since $\mu_x$ is continuous implies $\phi_x$ is bijective. Note that as in Figure \ref{fig:fibers}, the nonlinear maps $g_x$ all satisfy
\begin{equation}
    \label{eqn:fibConj}
    g_x=\phi_{f(x)}\circ F_x\circ \phi_x^{-1}.
\end{equation}

Now let $H_Y\colon \TT^2\to\TT^2$ be defined as $H_Y(x,y)=(x,\phi_x(y))$. Then the skew product $F(x,y)=(f(x),g_x(y))$ satisfies $F\circ H_Y=H_Y\circ \Tilde{F}$ and {$\leb=\Tilde{\mu}\circ H_Y$}. 
{
We now show that these conjugacies can be combined into a conjugacy between $(\TT^2,E_d)$ and $(\TT^2,F)$.
}

\begin{figure}[h!]
    \centering
    \begin{minipage}{0.25\textwidth}
        \centering
    \begin{tikzpicture}[scale=0.4]
    \begin{axis}[xmin = 0, xmax = 1,
    ymin = 0, ymax = 1, xlabel={$x$}, ylabel={$y$}]
    \addplot [] table {
        0  0
        0.609  0.711
        };
    \addplot [dotted] table {
        0.609  0
        0.609  0.711
        };
    \end{axis}
    \end{tikzpicture}
\end{minipage}$\overset{H_X}{\xrightarrow{\hspace*{5mm}}}$\begin{minipage}{0.25\textwidth}
        \centering
    \begin{tikzpicture}[scale=0.4]
    \begin{axis}[xmin = 0, xmax = 1,
    ymin = 0, ymax = 1, xlabel={$\overline{x}$},
    ylabel={$\overline{y}$}]
    \addplot [] table {
        0  0
        0.218  0.711
        };
    \addplot [dotted] table {
        0.218  0
        0.218  0.711
        };
    \end{axis}
    \end{tikzpicture}
\end{minipage}
$\overset{H_Y}{\xrightarrow{\hspace*{5mm}}}$\begin{minipage}{0.25\textwidth}
        \centering
    \begin{tikzpicture}[scale=0.4]
    \begin{axis}[xmin = 0, xmax = 1,
    ymin = 0, ymax = 1, xlabel={$\overline{x}$},
    ylabel={$\overline{y}$}]
    \addplot [] table {
        0  0
        0.218  0.422
        };
    \addplot [dotted] table {
        0.218  0
        0.218  0.422
        };
    \end{axis}
    \end{tikzpicture}
\end{minipage}
    
    \caption{The homeomorphisms $H_X$ and $H_Y$ reorganizing fibers according to the dynamics of $F$.}
\end{figure}
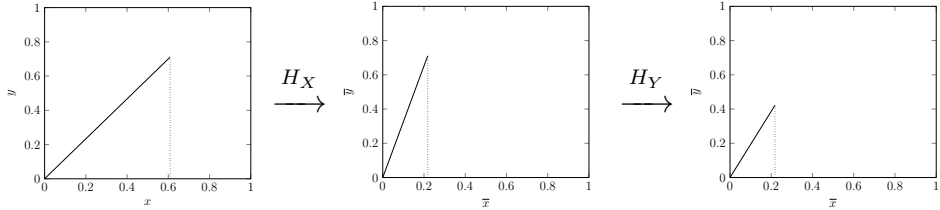

\begin{theorem}
\label{thm:torusConj}
    There is a conjugacy $H\colon\TT^2\to\TT^2$ that takes $(\TT^2,E_d,\mu)$ to $(\TT^2,F,\leb)$.
\end{theorem}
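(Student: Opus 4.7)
The plan is to define $H := H_Y \circ H_X$ and verify the three requirements: that $H$ intertwines $E_d$ with $F$, that $H_*\mu = \leb$, and that $H$ is a homeomorphism of $\TT^2$. The first two items are essentially bookkeeping from the identities already in hand; the third, which boils down to joint continuity of the fiber changes of coordinates, is the only nontrivial piece.

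The conjugacy relation is immediate by chaining the two intertwining identities established above:
\[
F \circ H \;=\; F \circ H_Y \circ H_X \;=\; H_Y \circ \tilde F \circ H_X \;=\; H_Y \circ H_X \circ E_d \;=\; H \circ E_d.
\]
For the measure pushforward, the same chain gives $H_* \mu = (H_Y)_* (H_X)_* \mu = (H_Y)_* \tilde\mu = \leb$, where the last equality uses the disintegration $\tilde\mu = \int \mu_x\, d\leb(x)$ together with the defining property $(\phi_x)_* \mu_x = \leb$ of each fiber map.

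The main obstacle is continuity of $H$, which reduces to showing that $H_Y$ is a homeomorphism, since $H_X$ is plainly one ($\hat\phi$ being a homeomorphism of $\sS$). Each fiber map $\phi_x(y) = \mu_x([0,y])$ is a bijection of $\sS$ because the conditional measures $\mu_x$ produced by item (5) of Theorem \ref{thm:conds} are non-atomic and have full support, inherited from the strict positivity of the H\"older densities $h$ and $\hat h$. Joint continuity of $(x,y) \mapsto \phi_x(y)$ will follow from the weak$^*$-continuity of $x \mapsto \mu_x$ stated in item (4): if $(x_n, y_n) \to (x_0, y_0)$, then since $\mu_{x_0}$ puts no mass on the endpoint $\{y_0\}$, a standard Portmanteau argument yields $\mu_{x_n}([0, y_n]) \to \mu_{x_0}([0, y_0])$. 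Consequently $H_Y$ is a continuous bijection of the compact Hausdorff space $\TT^2$ and therefore a homeomorphism, and $H = H_Y \circ H_X$ is the required conjugacy.
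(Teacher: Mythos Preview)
Your proof is correct and follows essentially the same route as the paper: set $H=H_Y\circ H_X$, chain the two intertwining relations to get $F\circ H=H\circ E_d$, push the disintegration through to obtain $H_*\mu=\leb$, and argue that $H$ is a homeomorphism from continuity of the fiberwise conjugacies. The only difference is that the paper dispatches the continuity step by citing \cite{H23}, whereas you spell out a Portmanteau argument; note, however, that item~(4) of Theorem~\ref{thm:conds} asserts weak$^*$-continuity of $x\mapsto\nu_x$, not $\mu_x$, so strictly speaking you should also invoke the continuity of the density $h(x,\cdot)/\hat h(x)$ from item~(5) to transfer this to $\mu_x$.
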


\begin{proof}
    Let $H_X$ and $ H_Y$ be defined as above. Suppose $H=H_Y\circ H_X$. By \cite{H23}, $\phi_x$ is continuous in $x$. Thus, $H$ is a homeomorphism as the composition of a continuous collection of fiberwise homeomorphisms.
    
    The homeomorphism $H$ is a conjugacy on $\TT^2$ since 
    \begin{align*}
        F\circ H(x,y)&=F(\hat{\phi}(x),\phi_{\hat{\phi}(x)}(y))\\
        &=(f(\hat{\phi}(x)),g_{\hat{\phi}(x)}(\phi_{\hat{\phi}(x)}(y)))\\
        %&=(\hat{\phi}(dx)),\phi_{f(\hat{\phi}x)}(dy)))\\
        &=(\hat{\phi}(dx)),\phi_{\hat{\phi}(dx)}(dy)))\\
        %&=H_Y(\hat{\phi}(dx),dy)\\
        %&=H(dx,dy)\\
        &=H\circ E_d(x,y).
    \end{align*}
    Note that we used equation \eqref{eqn:baseConj} and \eqref{eqn:fibConj} in the third equality.

    Also, for any $\psi\in C(\TT^2)$, we have 
    \begin{align*}
        \int \psi\circ H^{-1}(x,y) d \leb^2(x,y)&=\int\int\psi\circ (H_Y\circ H_X)^{-1}(x,y) d \leb(y)d \leb(x)\\
        &=\int\int\psi\circ H_X^{-1}(x,y) d\mu_x(y)d{\leb}(x)\\
        &=\int\int\psi(x,y) d\mu_x(y)d\hat{\mu}(x)\\
        &=\int\psi d\mu.\qedhere
    \end{align*}
\end{proof}

{
An analogous proof to Lemma \ref{lem:transDeriv} gives the following.

\begin{lemma}\label{lem:fibDeriv}
    Let $g_x$ be the $\leb$-invariant nonlinear coordinate change of the doubling map $F_x$ on $\{x\}\times\sS$ that sends $\mu_x$ to $\leb$. We have $g_x'(y)=e^{-\Tilde{\varphi}_x(\phi_x^{-1}({y}))}$
\end{lemma}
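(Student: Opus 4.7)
The plan is to imitate the proof of Lemma \ref{lem:transDeriv}, but now working inside the fiber $Y_x$, with the global objects replaced by their disintegrated fiberwise analogs. Fix $y\in\sS$ and $\delta>0$, set $\overline{y}=\phi_x^{-1}(y)$, and let $\overline{\delta}>0$ be chosen (via continuity of $\phi_x$) so that $\phi_x(\overline{y}+\overline{\delta})=y+\delta$. Using the commutative diagram \eqref{eqn:fibConj} together with the definition $\phi_x(\overline{y})=\mu_x([0,\overline{y}])$, the difference $g_x(y+\delta)-g_x(y)$ and the increment $\delta$ can be rewritten as measures of small intervals:
\[
g_x(y+\delta)-g_x(y)=\mu_{f(x)}\bigl(F_x(I)\bigr),\qquad \delta=\mu_x(I),
\]
where $I=[\overline{y},\overline{y}+\overline{\delta})$.

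Next, I would substitute the conditional-density formula from Theorem \ref{thm:conds}\eqref{itm:conds}, $d\mu_x=(h(x,\cdot)/\hat{h}(x))d\nu_x$, and invoke the fiberwise dual eigenrelation $\LL_x^*\nu_{f(x)}=e^{\Phi(x)}\nu_x$. For $\overline{\delta}$ small enough that $F_x$ is injective on $I$, the sum defining $\LL_x$ collapses to a single term, yielding
\[
\mu_x(I)=\frac{e^{-\Phi(x)}}{\hat{h}(x)}\int_{F_x(I)} e^{\varphi_x(F_x|_I^{-1}z)}\,h(x,F_x|_I^{-1}z)\, d\nu_{f(x)}(z).
\]
Dividing the two expressions and letting $\overline{\delta}\to 0$, the continuity of $h$, $\hat{h}$, $\varphi_x$, and $\Phi$ (all H\"older by Theorems \ref{thm:RPF} and \ref{thm:conds}) lets me carry out the same $\varepsilon$-squeeze used in Lemma \ref{lem:transDeriv} to conclude
\[
g_x'(y)=\frac{e^{\Phi(x)}\,h(f(x),F_x(\overline{y}))/\hat{h}(f(x))}{e^{\varphi_x(\overline{y})}\,h(x,\overline{y})/\hat{h}(x)}.
\]

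Finally, I would identify the right-hand side with $e^{-\tilde{\varphi}_x(\overline{y})}$, where $\tilde{\varphi}_x$ denotes the fiberwise Bowen-type normalization of $\varphi_x$ that makes $\LL_x\mathbbm{1}=\mathbbm{1}$ and leaves $\mu_x$ invariant under $\LL_x^*$; explicitly,
\[
\tilde{\varphi}_x(\overline{y})=\varphi_x(\overline{y})-\Phi(x)+\log\frac{h(x,\overline{y})/\hat{h}(x)}{h(f(x),F_x(\overline{y}))/\hat{h}(f(x))}.
\]
This is the exact fiberwise analog of the normalization $\tilde{\Phi}=\Phi+\log\hat{h}-\log\hat{h}\circ E_d-P(\Phi)$ used in Lemma \ref{lem:transDeriv}, with the global density $h/\hat{h}$ playing the role of $\hat{h}$ and $\Phi(x)$ playing the role of $P(\Phi)$.

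The main obstacle is bookkeeping rather than analysis: two interlocking layers of eigendata are in play, the global $(h,\nu,e^{P(\varphi)})$ for $\LL_\varphi$ on $\TT^2$ and the base $(\hat{h},\hat{\nu},e^{P(\Phi)})$ for $\LL_\Phi$ on $\sS$, and one has to track how they combine through $\mu_x=(h(x,\cdot)/\hat{h}(x))\nu_x$ to produce a single clean fiberwise potential. Once the algebra is organized as above, the limit argument itself is a verbatim copy of the one in Lemma \ref{lem:transDeriv}.
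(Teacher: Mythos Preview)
Your proposal is correct and is precisely the analogy the paper invokes: you run the same $\varepsilon$-squeeze argument from Lemma~\ref{lem:transDeriv} with the fiberwise eigendata $(\nu_x,\; h(x,\cdot)/\hat h(x),\; e^{\Phi(x)})$ playing the roles of $(\hat\nu,\; \hat h,\; e^{P(\Phi)})$. Your explicit formula for $\tilde\varphi_x$ agrees with the one implicitly used in the proof of Theorem~\ref{thm:det}.
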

}

\begin{theorem}
\label{thm:det}
Let $F$ and $H$ be as in Theorem \ref{thm:torusConj}. Then $\varphi\circ H^{-1}$ is the log-Jacobian for $F$; i.e. $|\det DF(x,y)|=e^{-\varphi(H^{-1}(x,y))}$
\end{theorem}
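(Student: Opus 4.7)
The plan is to exploit the skew-product form of $F$ and apply the two derivative lemmas fiber by fiber. Setting $(u,v)=H(x,y)$ so that $u=\hat{\phi}(x)$ and $v=\phi_u(y)$, the structure $F(u,v)=(f(u),g_u(v))$ makes $DF(u,v)$ block lower-triangular, so
$$|\det DF(u,v)| = |f'(u)|\cdot|\partial_v g_u(v)|.$$
Lemma \ref{lem:transDeriv} gives $f'(u)=e^{-\tilde\Phi(\hat\phi^{-1}(u))}=e^{-\tilde\Phi(x)}$, and Lemma \ref{lem:fibDeriv} gives $\partial_v g_u(v)=e^{-\tilde\varphi_u(\phi_u^{-1}(v))}=e^{-\tilde\varphi_{\hat\phi(x)}(y)}$. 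Multiplying,
$$|\det DF(H(x,y))| = \exp\bigl(-\tilde\Phi(x)-\tilde\varphi_{\hat\phi(x)}(y)\bigr).$$

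The remaining task is to identify $\tilde\Phi(x)+\tilde\varphi_{\hat\phi(x)}(y)$ with $\varphi(x,y)$ in its normalized form \eqref{eqn:cohomologuous}. Here I would argue from the thermodynamic decomposition of Theorem \ref{thm:conds}: the base potential $\Phi$ encodes the averaged contribution of $\varphi$ along fibers, and the fiberwise family $\bar\varphi_x$ arising from $\LL_x^*\nu_{fx}=e^{\Phi(x)}\nu_x$ captures the residual, so that $\varphi=\Phi\circ\pi_X+\bar\varphi_x+(u-u\circ E_d)+c$ for some continuous $u$ on $\TT^2$ and constant $c=P(\varphi)-P(\Phi)=0$. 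Normalizing each piece as in \eqref{eqn:cohomologuous}---on the base via $\hat{h},\hat{\nu},P(\Phi)$ and on each fiber via $h_x/\hat{h}(x),\nu_x,\Phi(x)$---absorbs the coboundary $u$, since the base correction $\log\hat{h}(x)-\log\hat{h}\circ f$ and the fiber correction $\log h_x - \log h_{fx}\circ g_x$ combine to exactly the coboundary needed to recover the normalized $\varphi$.

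The main obstacle is the bookkeeping in the second paragraph: three nested normalizations must be tracked so that the coboundary adjustments cancel pointwise rather than merely up to cohomology. As a cleaner cross-check I would also invoke uniqueness. Lebesgue-invariance of $F$ makes $-\log|\det DF|$ an automatically normalized potential, since $\LL_{-\log|\det DF|}\one=\one$. Pulling this identity back through $H$, and using $H_*\mu_\varphi=\leb$, shows that $-\log|\det DF|\circ H$ is a Ruelle-normalized H\"older potential whose equilibrium state is $\mu_\varphi$. Bowen's uniqueness of the normalized representative within the cohomology class of $\varphi$ then forces $-\log|\det DF|\circ H=\varphi$ pointwise, which is precisely the stated identity.
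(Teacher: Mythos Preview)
Your main line---lower-triangular $DF$, the two derivative lemmas, then multiply---is exactly the paper's approach. Where you flag the identification of the product with $\tilde\varphi$ as the ``main obstacle'', the paper simply expands both normalized potentials via the eigendata of Theorem~\ref{thm:conds} and lets two identities do the work: the fiber eigenvalue $e^{\Phi(\bar x)}$ appearing in the normalization of $\varphi_{\bar x}$ cancels against the $e^{-\Phi(\bar x)}$ from the base normalization, and the relation $\hat h(\bar x)\,h_{\bar x}(\bar y)=h(\bar x,\bar y)$ (a rewriting of item~\eqref{itm:conds}) together with $P(\Phi)=P(\varphi)$ from item~\eqref{itm:pot} collapses what remains to $e^{-\tilde\varphi(\bar x,\bar y)}$ in one line. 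No auxiliary decomposition $\varphi=\Phi\circ\pi_X+\bar\varphi_x+\text{coboundary}$ is needed or used. (Minor slip: your fiber potential should be indexed by the $E_d$-side coordinate $x$, not by $\hat\phi(x)$, since $\varphi$ lives on $(\TT^2,E_d)$.)

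Your uniqueness cross-check is not in the paper and is an attractive shortcut, but the justification you give has a gap. You invoke ``Bowen's uniqueness of the normalized representative within the cohomology class'', which requires first knowing that $\psi:=-\log|\det DF|\circ H$ and $\tilde\varphi$ are cohomologous; Bowen's criterion for that needs both potentials H\"older, hence $H$ bi-H\"older, which you assert without proof (the paper only mentions this upgrade as ``well-known'' and does not establish it). The argument can, however, be repaired without any regularity hypothesis on $H$: pulling back $\LL_{-\log|\det DF|}^{*}\leb=\leb$ through the conjugacy gives $\LL_\psi^*\mu_\varphi=\mu_\varphi$ directly, and since $\mu_\varphi$ has full support while $\psi$ and $\tilde\varphi$ are continuous, the identity $\mu_\varphi(\tau(B))=\int_B e^{\psi\circ\tau}\,d\mu_\varphi=\int_B e^{\tilde\varphi\circ\tau}\,d\mu_\varphi$ for each inverse branch $\tau$ of $E_d$ and every Borel $B$ forces $\psi=\tilde\varphi$ pointwise. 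With that fix, your alternative route is valid and arguably cleaner than the explicit eigendata computation.
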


\begin{proof}
    Note that the differential matrix for $F$ is
    \[
    DF(x,y)=\Bigg[\begin{matrix}f'(x)&0\\ \frac{\partial g_x}{\partial x}(x,y)&g_x'(y)\end{matrix}\Bigg].
    \]
    Lemmas \ref{lem:transDeriv} and \ref{lem:fibDeriv} imply $f'(x)$ and $g_x'(y)$ exist. Note that Corollary 3.9 in \cite{H23} and item \eqref{itm:conds} from Theorem \ref{thm:conds} implies that the Radon-Nikodym derivative along $\{\mu_x\}$ in continuous in $x$. {In particular, this implies that $\partial g_x/\partial x$ exists and is H\"older continuous.}
    
    Since $F$ is a skew product, $\partial f/\partial y=0$ which implies that $\partial g_x/\partial x$ does not contribute to the determinant.
    By Lemmas \ref{lem:transDeriv} and \ref{lem:fibDeriv} and Theorem \ref{thm:conds}, we have \begin{align*}
        \det DF(x,y)
        &=f'(x)g_x'(y)\\
        &=e^{-\Tilde{\Phi}(\hat{\phi}(\overline{x}))}e^{-\Tilde{\varphi}_{\hat{\phi}(\overline{x})}(\phi_x(\overline{y}))}\\
        &=\frac{e^{P(\Phi)}\hat{h}(E_d(\overline{x}))}{\hat{h}(\overline{x})e^{\Phi(\overline{x})}}\cdot\frac{e^{\Phi(x)}h_{E_d(\overline{x})} (F_x(\overline{y}))}{h_{\overline{x}} (\overline{y})e^{\varphi_{\overline{x}}(\overline{y})}}\\
        &=\frac{e^{P(\Phi)}\hat{h}(E_d(\overline{x}))h_{E_d(\overline{x})} (F_x(\overline{y}))}{\hat{h}(\overline{x})h_x (\overline{y})e^{\varphi_{\overline{x}}(\overline{y})}}\\
        &=\frac{e^{P(\varphi)}h (E_d(\overline{x}),F_x(\overline{y}))}{h(\overline{x},\overline{y})e^{\varphi({\overline{x}},\overline{y})}}=e^{-\Tilde{\varphi}(\overline{x},\overline{y})}. \qedhere
    \end{align*}
\end{proof}

{In dimension $n=2$, Gogolev--Rodriguez-Hertz \cite{GRH21} give the following example by de la Llave \cite{dlL92} that implies we cannot expect any better regularity.

\begin{example}
\label{ex:}
    Let $\alpha\in C^{r}(\sS),\ r\geq 1$ be such that $F(x,y)=(dx,dy+\alpha(x))$ preserves Lebesgue measure $\leb^2$. Then the conjugacy between $(\TT^2,E_d)$ and $(\TT^2,F)$ has the form $H(x,y)=(x,y+\beta(x))$ where $\beta$ is a Weierstrass function explicity expressed as the series
    \[
    \beta(x)=\frac 1d \sum_{k\geq0}\frac1{d^k}\alpha(d^kx)
    \]
    (see \cite[Theorem 6.4]{dlL92} and \cite[Example 7.2]{GRH21}). Gogolev and Rodriguez-Hertz shows that, in this case, $\beta\in C^{x|\log x|}$.
\end{example}

Now we address the number of conjugacies that satisfy Theorem \ref{thm:torusConj}. As in {Corollary \ref{cor:}}, the number of conjugacies is equal to twice the degree of the map.}

{\begin{theorem}\label{thm:Conj}
    Given a skew product $F$ with $\deg f$ and $\deg g_x$ both constant, there are $2(\deg f )(\deg g_x)$ conjugacies that send {the equilibrium state} $\mu$ to Lebesgue $\leb^2$. Moreover, each of these are rotations (orientation preserving or reversing) of each other.
\end{theorem}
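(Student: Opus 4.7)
The plan is to reduce the counting of valid conjugacies on $\TT^2$ to separate one-dimensional counts on the base and on a reference fiber, each handled by Corollary~\ref{cor:}, and then to recombine these counts using the continuity of the conditional measures $\{\mu_x\}$ supplied by Theorem~\ref{thm:conds}.

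First, I argue that any conjugacy $H\colon(\TT^2,E_d,\mu)\to(\TT^2,F,\leb^2)$ with $F$ a skew product must itself respect the skew-product structure, and so decomposes as $H(x,y)=(\hat\phi(x),\psi_x(y))$. This is because the first coordinate of the identity $F\circ H=H\circ E_d$ does not involve $y$, forcing the base coordinate of $H$ to depend only on $x$. The base map $\hat\phi\colon\sS\to\sS$ then conjugates $E_d$ to $f$, and the fiber maps $\psi_x\colon Y_x\to Y_{\hat\phi(x)}$ obey the cocycle relation $g_{\hat\phi(x)}\circ\psi_x=\psi_{E_d x}\circ F_x$. Combining the measure condition $H_*\mu=\leb^2$ with the disintegration $\mu=\int\mu_x\,d\hat\mu(x)$ from Theorem~\ref{thm:conds}, this splits into the separate conditions $\hat\phi_*\leb=\hat\mu$ on the base and $(\psi_x)_*\mu_x=\leb$ on each fiber.

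Second, I apply Corollary~\ref{cor:} to the base system $(\sS,E_d,\hat\mu)$, which gives $\deg f$ orientation-preserving and $\deg f$ orientation-reversing options for $\hat\phi$. With $\hat\phi$ fixed, I apply Corollary~\ref{cor:} to a single reference fiber $Y_{x_0}$ to count the options for $\psi_{x_0}$. The key point is that the weak$^*$-continuity of $x\mapsto\mu_x$ from Theorem~\ref{thm:conds}, combined with the cocycle relation, prevents the cyclic-shift index and the orientation of $\psi_x$ from jumping as $x$ varies, so the single fiberwise choice $\psi_{x_0}$ determines the entire family $\{\psi_x\}$ uniquely (first along the forward orbit of $x_0$ by the cocycle, then globally by density and continuity). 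In particular, the fiber contribution to the count is $2\deg g_x$, not an infinite product indexed by $x$.

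Finally, I combine the base and fiber counts. The naive product $2\deg f\cdot 2\deg g_x$ overcounts by a factor of two because the global orientation of $H$ on $\TT^2$ is the product of the base and fiber orientations, so the four orientation combinations collapse into two orientation classes on $\TT^2$, each containing $(\deg f)(\deg g_x)$ choices, for a total of $2(\deg f)(\deg g_x)$ conjugacies. The rotational structure of these choices --- base rotations by multiples of $1/\deg f$ and fiber rotations by multiples of $1/\deg g_x$, together with the two reflections --- shows that any two valid conjugacies differ by a rotation (or reflection) of $\TT^2$ commuting with $E_d$, establishing the ``rotations of each other'' claim. The main obstacle I foresee is the orientation coupling in this last step: making precise why switching both the base and fiber orientations simultaneously should not be double-counted. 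I plan to handle this by explicitly exhibiting the symmetry of $(\TT^2,E_d)$ that realizes this coupling and verifying that it normalizes the set of valid conjugacies in the expected way.
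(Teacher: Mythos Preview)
Your overall strategy---decompose into base and fiber, apply Corollary~\ref{cor:} to each, then multiply---matches the paper's approach. The paper's proof is quite terse: it invokes a Markov partition on $\TT^2$ (via \cite{FJ79}), asserts that a conjugacy between skew products must respect the induced base and fiber partitions, and then multiplies the one-dimensional counts from Corollary~\ref{cor:} to obtain $(\deg f)(\deg g_x)$ orientation-preserving conjugacies. Your version is considerably more explicit about how the fiberwise choice propagates (via the cocycle relation and the weak$^*$-continuity of $x\mapsto\mu_x$ from Theorem~\ref{thm:conds}), which the paper does not spell out.

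There is, however, a genuine gap in your first step. The claim that $H$ must have fibered form $H(x,y)=(\hat\phi(x),\psi_x(y))$ is not established by the reason you give. Writing $H=(H_1,H_2)$, the first-coordinate equation reads $f(H_1(x,y))=H_1(dx,dy)$; the right-hand side depends on $y$, so nothing here forces $H_1$ to be a function of $x$ alone. Concretely, when $\varphi$ is constant (so $\mu=\leb^2$ and one may take $F=E_d$), the shear $H(x,y)=(x+y,y)$ is a conjugacy sending $\mu$ to $\leb^2$ that is \emph{not} fiber-preserving. Either the statement is implicitly restricted to fiber-preserving conjugacies of the type built in Theorem~\ref{thm:torusConj}---which is how the paper reads it---or an additional argument is required. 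The paper sidesteps this by working with the product Markov partition rather than with the functional equation for $H_1$.

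Your final orientation bookkeeping is also shaky. Having $2\deg f$ base choices and $2\deg g_x$ fiber choices yields $4(\deg f)(\deg g_x)$ distinct fibered conjugacies; the fact that two of the four orientation combinations give the same global orientation on $\TT^2$ does not identify them as maps, so dividing by two is not justified on those grounds. The paper simply records $(\deg f)(\deg g_x)$ orientation-preserving conjugacies and doubles, without treating the mixed-orientation cases; you should either restrict to the case where base and fiber orientations agree, or name explicitly the symmetry you are quotienting by.
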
}

\begin{proof}
{Fist, note that we can define a Markov structure on $\TT^2$ using \cite{FJ79}.} Given a Markov partition for $(\TT^2, F)$ with $\deg(F)=d$, there are $d!$ permutations of the symbols in the partition. For a skew product, a conjugacy must respect the corresponding partitions of the base and fibers. Since we have assumed that $\deg f$ and $\deg g_x$ are constant, then by Corollary \ref{cor:}, there are $d=(\deg f)(\deg g_x)$ (o.p.) conjugacies that send $\mu$ to $\leb^2$.
\end{proof}

\section{Extension to conjugacies on $\TT^n$}
\label{sec:ntorus}

In this section, we note that the results from Section \ref{sec:2torus} can be extended to $(\TT^2,E_d)$. 

\begin{theorem}\label{thm:ntorus}
    For each $n\geq 2$, let $E_d\colon \TT^n\to\TT^n$ be the model map. Let $\varphi\colon\TT^n\to\RR$ be a H\"older continuous function and $\mu$ its corresponding equilibrium state. Then there exists a conjugacy $H\colon\TT^n\to\TT^n$ to a Lebesgue-invariant $C^{1+\alpha}$ expanding skew product $F$ that transports the equilibrium state $\mu$ to Lebesgue measure.    
\end{theorem}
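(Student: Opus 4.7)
The plan is to prove Theorem \ref{thm:ntorus} by induction on $n$, with base case $n=2$ supplied by Theorems \ref{thm:torusConj}, \ref{thm:det}, and \ref{thm:Conj}. For the inductive step, decompose $\TT^n=\TT^{n-1}\times\sS$ and view the model map as a skew product
\[
E_d(\vec x,y)=\bigl(E_d^{(n-1)}(\vec x),\, dy \bmod 1\bigr),
\]
with base $\TT^{n-1}$, fibers $\sS$, and constant fiber dynamics $F_{\vec x}(y)=dy$.

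First I would apply Theorem \ref{thm:conds} with $X=\TT^{n-1}$ and $Y=\sS$ to the equilibrium state $\mu$ of $\varphi$. This yields a H\"older base potential $\Phi$ on $\TT^{n-1}$ whose unique equilibrium state is $\hat\mu=\mu\circ\pi_X^{-1}$, a weak$^*$-continuous family $\{\mu_{\vec x}\}$ of conditional measures on the fibers with $\mu=\int \mu_{\vec x}\,d\hat\mu(\vec x)$, and eigendata $(\hat h,\hat\nu)$ for $\LL_\Phi$. Since $\Phi$ is H\"older, the inductive hypothesis applies to the triple $(\TT^{n-1},E_d^{(n-1)},\Phi)$ and produces a conjugacy $H_X\colon\TT^{n-1}\to\TT^{n-1}$ to a $C^{1+\alpha}$ expanding skew product $f$ on $\TT^{n-1}$ satisfying $f\circ H_X=H_X\circ E_d^{(n-1)}$ and $(H_X)_*\hat\mu=\leb^{n-1}$.

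Next I would build fiberwise conjugacies by mimicking the $\TT^2$ construction: set $\phi_{\vec x}(y)=\mu_{\vec x}([0,y])$, a homeomorphism of $\sS$ sending $\mu_{\vec x}$ to Lebesgue, depending continuously on $\vec x$ by item (4) of Theorem \ref{thm:conds}. Define $H(\vec x,y)=(H_X(\vec x),\phi_{\vec x}(y))$. Then $H$ is a homeomorphism and
\[
F:=H\circ E_d\circ H^{-1}(\vec x,y)=\bigl(f(\vec x),g_{\vec x}(y)\bigr),\qquad g_{H_X(\vec x)}=\phi_{d\vec x}\circ F_{\vec x}\circ \phi_{\vec x}^{-1}.
\]
An iterated change-of-variables computation identical to the one in the proof of Theorem \ref{thm:torusConj} gives $H_*\mu=\leb^n$, and the fiberwise expansion constant $d$ combined with the expansion of $f$ makes $F$ uniformly expanding.

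For the regularity claim I would imitate Theorems \ref{lem:transDeriv}, \ref{lem:fibDeriv}, and \ref{thm:det}: the Jacobian of $F$ is block lower triangular with base block $Df(\vec x)$ (which is $C^\alpha$ by induction) and fiber block $g_{\vec x}'(y)=e^{-\tilde\varphi_{\vec x}(\phi_{\vec x}^{-1}(y))}$ (a direct repeat of Lemma \ref{lem:fibDeriv} since each fiber is again $\sS$). The determinant telescopes, via the cocycle relation between $\Phi$ and $\varphi$ in Theorem \ref{thm:conds}(1), to $|\det DF|=e^{-\tilde\varphi\circ H^{-1}}$, so $F$ is $C^{1+\alpha}$. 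Finally, the count of conjugacies $d$ follows from iterating the Markov-partition counting argument of Theorem \ref{thm:Conj} across the $n$ circular factors.

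The main obstacle is verifying that everything produced by Theorem \ref{thm:conds} has enough regularity to feed back into the induction: the base potential $\Phi$ must be H\"older (given by Theorem \ref{thm:conds}(1)), the measures $\mu_{\vec x}$ must depend H\"older-continuously on $\vec x$ so that the off-diagonal block $\partial g_{\vec x}/\partial\vec x$ of $DF$ exists and is continuous (this uses Corollary 3.9 of \cite{H23} as in the proof of Theorem \ref{thm:det}), and the "almost constant" hypothesis of Theorem \ref{thm:conds} must be preserved at each step of the induction, which is where the argument is most delicate.
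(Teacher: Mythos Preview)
Your argument is correct and, like the paper's, proceeds by induction on $n$, but you use the opposite splitting of $\TT^n$: you take base $X=\TT^{n-1}$ and fiber $Y=\sS$, whereas the paper takes base $\sS$ (applying McMullen's one-dimensional result to the first coordinate) and fibers $\TT_x=\{x\}\times\TT^{n-1}$. Your choice makes the induction cleaner in two respects. First, by Theorem~\ref{thm:conds}\eqref{itm:baseES} the marginal $\hat\mu$ on $\TT^{n-1}$ is itself the equilibrium state of a H\"older potential $\Phi$, so the inductive hypothesis applies directly to the base. Second, the fiber is always $\sS$, so the explicit distribution-function formula $\phi_{\vec x}(y)=\mu_{\vec x}([0,y])$ and Lemma~\ref{lem:fibDeriv} carry over verbatim from the $\TT^2$ case. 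The paper's decomposition keeps the base one-dimensional throughout, so McMullen and Lemma~\ref{lem:transDeriv} always apply there, but the inductive step on the $(n-1)$-dimensional fibers is only sketched (``assume that there is a family of homeomorphisms $\{\phi_x\}$''); the conditional measures $\mu_x$ on those fibers are not themselves equilibrium states, so one cannot literally invoke the inductive hypothesis on them and must instead iterate the fibred construction. The obstacles you flag---H\"older regularity of $\Phi$, regular dependence of $\mu_{\vec x}$ on $\vec x$, and the ``almost constant'' hypothesis in Theorem~\ref{thm:conds}---are genuine but are exactly the points the paper's proof passes over in silence.
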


\begin{proof}    
    This is just an induction step on Theorems \ref{thm:Conj} and \ref{thm:det}. Fix some $n\in\NN$. As in the two dimensional case, we apply McMullen to the first coordinate. Let $\hat{\phi}\colon \sS\to\sS$ be such that $f\circ\hat{\phi}=\hat{\phi}\circ E_{d}$ and {$\leb=\hat{\mu}\circ \hat{\phi}$} where $\hat{\mu}=\mu\circ\pi_X^{-1}$.

    \begin{figure}[h!]
        \centering
        \begin{tikzcd}
            \TT_x \arrow[r, "G_x"] \arrow[d,"\phi_x"']
            & \TT_{f(x)} \arrow[d, "\phi_{f(x)}"]\\
            \TT^{n-1} \arrow[r, "E_d"']
            & \TT^{n-1}
        \end{tikzcd}
    
        \caption{Commutative diagram between $(\TT_x,G_x)$ and $(\TT^{n-1},E_d)$.}
        \label{fig:TnFibers}
    \end{figure}
    
    For any $x\in\sS$, let $\TT_x=\{x\}\times\TT^{n-1}$. Assume that there is a family of homeomorphisms $\{\phi_x\colon \TT_x\to\TT^{n-1}\}$ to maps $G_x\colon\TT_x\to\TT_{f(x)}$ that satisfy the following commutative diagram in Figure \ref{fig:TnFibers}. The rest of the proof is analogous to those of Theorems \ref{thm:Conj} and \ref{thm:det}.
\end{proof}

{
It is worth mentioning that de la Llave \cite{dlL92} gives a more general version of Example \ref{ex:} that holds in any dimension and initial regularity of the conjugacy. It shows we cannot hope for any higher regularity of the conjugacy in dimension $n\geq 4$. That is, there exists Anosov diffeomorphisms $f$ and $g$ on $\TT^n$ and $h\in C^k(\TT^n)$ such that $f\circ h=h\circ g$ such that $h\not\in C^{k+1}$.
}

\bibliography{main}

\begin{thebibliography}{10}

\bibitem{B08}
{\sc Bowen, R.}
\newblock {\em Equilibrium states and the ergodic theory of {A}nosov
  diffeomorphisms}, revised~ed., vol.~470 of {\em Lecture Notes in
  Mathematics}.
\newblock Springer-Verlag, Berlin, 2008.
\newblock With a preface by David Ruelle.

\bibitem{dlL92}
{\sc de~la Llave, R.}
\newblock Smooth conjugacy and {S-R-B} measures for uniformly and non-uniformly
  hyperbolic systems.
\newblock {\em Communications in Mathematical Physics\/} (1992).

\bibitem{DG99}
{\sc Denker, M., and Gordin, M.}
\newblock {G}ibbs measures for fibred systems.
\newblock {\em Advances in Mathematics Volume 148, Issue 2\/} (1999), 161--192,
  ISSN 0001--8708.

\bibitem{FJ79}
{\sc Farrell, F.~T., and Jones, L.~E.}
\newblock Markov cell structures for expanding maps in dimension two.
\newblock {\em Transactions of the American Mathematical Society 255\/} (1979),
  315--327.

\bibitem{GRH21}
{\sc Gogolev, A., and Hertz, F.~R.}
\newblock Smooth rigidity for very non-algebraic expanding maps, 2021.

\bibitem{H20}
{\sc Hafouta, Y.}
\newblock Limit theorems for some time-dependent expanding dynamical systems.
\newblock {\em Nonlinearity 33}, 12 (2020), 6421--6460.

\bibitem{Ha23}
{\sc Hafouta, Y.}
\newblock Explicit conditions for the {CLT} and related results for
  non-uniformly partially expanding random dynamical systems via effective rpf
  rates, 2023.

\bibitem{He25}
{\sc He, Y.~M., Lee, H., and Park, I.}
\newblock Pressure metrics in geometry and dynamics, 2025.

\bibitem{H23}
{\sc Hemenway, G.}
\newblock Equilibrium states for non-uniformly expanding skew products.
\newblock {\em Ergodic Theory and Dynamical Systems 44}, 9 (2024), 2599–2620.

\bibitem{K92}
{\sc Kifer, Y.}
\newblock {Equilibrium States for Random Expanding Transfomations}.
\newblock {\em Random and Computational Dynamics\/} (1992), 1--31.

\bibitem{McM08}
{\sc McMullen, C.}
\newblock Thermodynamics, dimension and the {W}eil–{P}etersson metric.
\newblock {\em Inventiones mathematicae\/} (2008).

\bibitem{McM}
{\sc McMullen, C.~T.}
\newblock A compactification of the space of expanding maps on the circle.
\newblock {\em Geometric And Functional Analysis 18\/} (2009), 2101--2119.

\bibitem{P83}
{\sc Petersen, K.~E.}
\newblock {\em Ergodic Theory}.
\newblock Cambridge Studies in Advanced Mathematics. Cambridge University
  Press, 1983.

\bibitem{Sh69}
{\sc Shub, M.}
\newblock Endomorphisms of compact differentiable manifolds.
\newblock {\em American Journal of Mathematics 91\/} (1969), 175--199.

\bibitem{SS85}
{\sc Shub, M., and Sullivan, D.}
\newblock Expanding endomorphisms of the circle revisited.
\newblock {\em Ergodic Theory and Dynamical Systems 5}, 2 (1985), 285–289.

\bibitem{SU13}
{\sc Simmons, D., and Urbanski, M.}
\newblock Relative equilibrium states and dimensions of fiberwise invariant
  measures for random distance expanding maps.
\newblock {\em Stochastics and Dynamics 14\/} (2014).

\bibitem{SSV}
{\sc Stadlbauer, M., Suzuki, S., and Varandas, P.}
\newblock Thermodynamic formalism for random non-uniformly expanding maps.
\newblock {\em Communications in Mathematical Physics 385\/} (2021), 369--427.

\bibitem{S91}
{\sc Stafford, M.}
\newblock Markov partitions for expanding maps of the circle.
\newblock {\em Transactions of the American Mathematical Society 324}, 1
  (1991), 385--403.

\bibitem{WaltersET}
{\sc Walters, P.}
\newblock {\em An Introduction to Ergodic Theory}.
\newblock Springer, 1978.

\bibitem{W78}
{\sc Walters, P.}
\newblock {Invariant measures and equilibrium states for some mappings which
  expand distances}.
\newblock {\em Transactions of the American Mathematical Society 236\/} (1978),
  121--153.

\end{thebibliography}
\bibliographystyle{acm}

\end{document}